\newtheorem{theorem}{Theorem}[section]
\newtheorem{lemma}[theorem]{Lemma}
\newtheorem{proposition}[theorem]{Proposition}
\newtheorem{corollary}[theorem]{Corollary}
\theoremstyle{definition}
\newtheorem{definition}[theorem]{Definition}
\newtheorem{example}[theorem]{Example}
\theoremstyle{remark}
\newtheorem{remark}[theorem]{Remark}
\numberwithin{equation}{section}
\begin{document}
\setcounter{page}{1}

\title[Some new classes of rings which have the McCoy condition]{Some new classes of rings which have \\ the McCoy condition}

\author[P. Danchev]{P. Danchev}
\author[M. Zahiri]{M. Zahiri}

\address{Institute of Mathematics and Informatics, Bulgarian Academy of Sciences, 1113 Sofia, Bulgaria}
\email{\textcolor[rgb]{0.00,0.00,0.84}{pvdanchev@yahoo.com; danchev@math.bas.bg}}
\address{Department of Pure Mathematics, Faculty of Mathematical Sciences, Tarbiat Modares University, 14115-134 Tehran, Iran}
\email{\textcolor[rgb]{0.00,0.00,0.84}{m.zahiri86@gmail.com}}

\thanks{*Corresponding author: m.zahiri86@gmail.com}

\subjclass{16D15; 16D40; 16D70} \keywords{Weakly reversible rings; McCoy rings; rings with Property$(A)$}

\begin{abstract} We define here the notion of a {\it weakly reversible ring} $R$ saying that a non-zero element $a\in R$ is weakly reversible if there exists an integer $m>0$ depending on $a$ such that $a^m\neq 0$ is reversible, that is, $r_R(a^m)=l_R(a^m)$. In addition, $R$ is weakly reversible if all its elements are weakly reversible.

It is shown that all weakly reversible rings are abelian McCoy rings and so, particularly, they are abelian 2-primal rings. Moreover, we construct a weakly reversible ring which is {\it not} reversible.

We also show that if $R$ is a weakly reversible ring, then the polynomial ring $R[x]$ is strongly AB. Thus, in particular, the weakly reversible ring $R$ is zip if, and only if, $R[x]$ is zip. We, moreover, prove that if $R$ is a weakly reversible ring and every prime ideal of $R$ is maximal, then both $R$ and $R[x]$ are AB rings.
\end{abstract}

\maketitle

\section{Introduction and Motivation}

Throughout this article, all rings are associative with identity. Almost all notions and notations are standard being in agreement with the well-known classical books \cite{Lam2} and \cite{Lam}. As usual, we denote by $W(R)$ the sum of the nilpotent ideals of $R$ (called the {\it Wedderburn radical} of $R$), by $Nil_{*}(R)$ the intersection of all prime ideals of $R$ (called the {\it lower nil-radical} of $R$), by $L$-$rad(R)$ the largest locally nilpotent ideal
of $R$ (called the {\it Levitzky radical} of $R$), by $Nil^{*}(R)$ the sum of all nil-ideals (called the {\it upper nil-radical} of $R$), and by $Nil(R)$ the set of all nilpotent elements of $R$.

We always have the inclusions:

$$W(R)\subseteq N il_{*}(R) \subseteq Nil^{*}(R)\subseteq Nil(R).$$

\medskip

\noindent Likewise, we recall that a ring $R$ is {\it $2$-primal} if $Nil_{*}(R) = Nil(R)$.

In the other vein, a ring $R$ is said to be {\it reversible} if the condition $ab=0$ implies that $ba=0$ for all elements $a,b\in R$. Mimicking Feller \cite{fel}, a ring $R$ is said to be a {\it right} (resp., {\it left}) {\it duo}, provided every right (resp., left) ideal of it is an ideal.

Besides, in \cite{zah}, a ring $R$ is said to be {\it nil-reversible}, if $ab=0$ implies $ba=0$ for all $a\in Nil(R)$ and $b\in R$. Clearly, any reversible ring is nil-reversible, but the converse implication manifestly fails.

Imitating \cite{moh}, a ring $R$ is said to be {\it left $\pi$-duo} if, for every $0\neq  a \in R$, there exists a positive integer $k$ for which $0\neq a^kR \subseteq Ra^k$. The definition of a right $\pi$-duo is quite similar. If a ring is both left and right $\pi$-duo, it is just called $\pi$-duo. Recall also that a ring $R$ is {\it $\pi$-CN}   whenever every nilpotent element in $R$ of index $2$ is central.

Referring to McCoy \cite[Theorem 2]{NHM}, all commutative rings have the property that, if the annihilator of a polynomial in $R[x]$ is a non-zero, then the annihilator coefficient of the polynomial is non-zero in $R$ too.  However, this property may not be longer true in noncommutative rings. Nevertheless, it plays an important role in noncommutative algebras, because the zip property of noncommutative rings may not pass to their polynomial rings. %However under this property the ideal satisfies.

Furthermore, in \cite{niel}, Nielsen introduced the {\it McCoy property} of rings, becoming the most widely used tool for studying the zero-divisors and annihilators of a ring extension: a ring $R$ is said to be {\it right McCoy} if the   condition $f(x)g(x)=0$ implies the condition $f(x)c=0$ for some non-zero $c\in R$, where $f(x),g(x)$ are non-zero polynomials in $R[x]$. Additionally, {\it left McCoy} rings are defined dually. Generally, a ring $R$ is called {\it McCoy} if it is both left and right McCoy. This is really a very large class of rings as the next specifications illustrate: Nielsen proved there that all reversible rings are themselves McCoy. In this aspect, in \cite{Camillo}, the authors show that right duo rings are just right McCoy. Also, in \cite{zah}, it was proven that nil-reversible rings are McCoy as well. Following \cite{moh}, any $\pi$-CN ring and any $\pi$-duo ring are McCoy too.  \\

We now demonstrate the following irreversible relationships: \\

\indent\indent\indent\indent\indent\indent\indent\indent\indent\indent\indent\indent right duo\,\,\, $\longrightarrow$ $\pi$-duo\\
\indent\indent \indent\indent\indent\indent\indent
\indent\indent\indent$\nearrow$\indent\indent\indent\indent\indent\indent\indent \indent\indent\indent \indent$\searrow $\\
\indent\indent\indent\indent  $\emph{commutative}\rightarrow\emph{reversible}\rightarrow \emph{nil-reversible}\rightarrow$  McCoy       \\
\indent\indent\indent\indent\indent\indent\indent\indent\indent\indent$\searrow$\indent\indent\indent\indent\indent\indent \indent\indent\indent\indent\indent$\nearrow$

\indent \indent\indent\indent\indent\indent\indent \indent\indent\indent\indent\indent$\ {CN}$\ \ \ $\longrightarrow \indent \text{$\pi$-\textit{CN}}$\\     \indent\indent\indent \indent \indent\indent\indent\indent\indent\indent\indent  \indent\indent\indent\indent\indent \indent\indent\,\,\,\,  $\downarrow \\      \indent\indent\indent \indent \indent\indent\indent\indent\indent\indent\indent   \indent\indent\indent\indent\indent\indent\,\,\,\text{2-primal} $\\

We shall show in the sequel that the condition "reversibility" can, actually, be slightly expanded to the so-termed "weak reversibility" so that the latter is still satisfying the McCoy property.

\medskip

The concept of bounding an one-sided ideal by a two-sided ideal goes back at least to Jacobson \cite{11}. He stated that the right ideal of a ring $R$ is {\it bounded}, provided that it contains a non-zero ideal of $R$. In several ways, this terminology has been extended as follows. According to Faith \cite{5}, a ring $R$ is called {\it strongly right} (resp., {\it strongly left}) {\it bounded} if every non-zero right (left) ideal of $R$ contains a non-zero ideal. So, a ring is called {\it strongly bounded} if it is simultaneously strongly right and strongly left bounded. It is principally known that right (resp., left) duo rings are strongly right (resp., left) bounded and semi-commutative.

On the other hand, a ring $R$ is said to be right (resp., left) {\it AB} if every essential right (resp., left) annihilator of $R$ is bounded. Moreover, owing to \cite{Hwang}, a ring $R$ is called {\it strongly right} (resp., {\it strongly left}) {\it AB} if every non-zero right (resp., left) annihilator of $R$ is bounded; thereby, $R$ is called {\it strongly AB} if $R$ is simultaneously strongly right and strongly left AB. Obviously, strongly right bounded
rings and semi-commutative rings are both strongly right AB.

In the present paper, we arrive at the curious fact that the lower radical coincides with the set of nilpotent elements in a weakly reversible ring, as defined in the next section, so that any weakly reversible ring $R$ is necessarily 2-primal, and so the equality $Nil(R[x])=Nil(R)[x]$ holds (see, for instance, \cite{Lam2} and \cite{Lam}). Also, we will see that all weakly reversible rings are abelian, i.e., their idempotent elements are central (see Proposition~\ref{abel}).

Our basic motivating tool, however, is the existence of a weakly reversible ring that is {\it not} reversible (see Example~\ref{major}). In this way, our main result states that weakly reversible rings are McCoy (see Corollary~\ref{mccoy}), which is a direct consequence of the more general setting that the polynomial ring $R[x]$ is always strongly AB over the weakly reversible ring $R$ (see Theorem~\ref{maj}). As a consequence, we show that when $R$ is a right zip weakly reversible ring, then $R[x]$ is right strongly AB (see Corollary~\ref{zip}).

\section{Right and left weakly reversible rings}

We begin our work with a pivotal instrument which slightly expands the property of being reversible.

\begin{definition} We say that an element $a\in R$ is {\it reversible} if, for each $b\in R$, $b\in l_R(a)\Longleftrightarrow b\in r_R(a)$. Thus, a ring $R$ is {\it reversible} if all elements of $R$ are reversible.
We call a ring $R$ {\it weakly reversible} if, for each non-zero element $a\in R$, there is an integer $m>0$ such that $a^m$ is a non-zero reversible element of $R$.
\end{definition}

Traditionally, for every ring $R$ and $a\in R$, we denote by $RaR$ the two-sided ideal of $R$ generated by $a$.

\medskip

We now ready to exhibit our crucial construction of the existence of a weakly reversible ring which is {\it not} reversible.

\begin{example}\label{major} There is a weakly reversible ring that is {\it not} reversible.

In fact, let $F$ be a field and set $A := F\langle x, y\rangle$, where $x$ and $y$ are non-commuting indeterminates. Suppose also that $I$ is the two-sided ideal $AxyA+Ay^2xA+Ayx^2A+Ax^3A+Ay^3A$ of the ring $A$. Note that
every element of the factor-ring $R:=A/I$ can be written uniquely in the form $$\alpha=a+\sum_{i=1}^2a_ix^i+\sum_{j=1}^2b_jy^j+cyx,\ \text{ where } a,a_i,b_j,c\in F.$$
It not so hard to see that $$Nil(R)=Fyx+Fx+Fx^2+Fy+Fy^2$$ and that $$\alpha=s+\sum_{i=1}^2r_ix^i+\sum_{j=1}^2t_jy^j+dyx\in R$$ is invertible precisely when $s\neq 0$, where $s,r_i,t_i,d\in F$. This unambiguously shows that the set of all left and right zero divisor elements of $R$ is contained in $Nil(R)$.

Notice that $xy=0$ and $yx\neq 0$ imply that $R$ is {\it not} a reversible ring. However, we next show that $R$ is a weakly reversible ring. To that end, consider the non-zero element $$\alpha=\sum_{i=1}^2r_ix^i+\sum_{j=1}^2t_jy^j+dyx\in R.$$ Then, we differ two basic cases:

\medskip

$\textbf{Case 1}$ -- $r_1=t_1=0$: So, $\alpha=r_2x^2+t_2y^2+dyx$ and it is evident in this case that $\alpha$ is a reversible element of $R$.

\medskip

$\textbf{Case 2}$ -- Either $r_1 \neq 0$ or $t_1\neq 0$: Thus, $\alpha^{2}=r_1^2x^2+t_1^2y^2+t_1r_1yx$ is a non-zero reversible element of $R$, and hence $R$ is a weakly reversible ring, as promised.
\end{example}

Given a ring $R$, we denote $S_n(R)$ to be the ring $$ \left\{ \left. \begin{pmatrix}
 a_{0} & a_{12} & a_{13} & \cdots & a_{1n} \\
 0 & a_{0} & a_{23} & \cdots & a_{2n} \\
 0 & 0 & a_{0} & \cdots & a_{3n} \\
 \ddots & \ddots & \ddots & \vdots & \ddots \\
 0 & 0 & 0 & \cdots & a_{0}
\end{pmatrix} \right| a_0, a_{ij}\in R \right\}.$$

We are now in a position to establish the following surprising criterion, which gives some transversal between the properties "reversible" and "weakly reversible".

\begin{lemma}
Let $R$ be a ring. Then, $S_2(R)$ is weakly reversible if, and only if, $R$ is reversible.
\end{lemma}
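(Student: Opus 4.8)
The plan is to translate everything into the pair description of $S_2(R)$. Writing a typical element as $(a,b):=\begin{pmatrix} a & b\\ 0 & a\end{pmatrix}$, multiplication reads $(a,b)(c,d)=(ac,\,ad+bc)$, so that $(p,q)(c,d)=(pc,\,pd+qc)$ while $(c,d)(p,q)=(cp,\,cq+dp)$. Consequently an element $(c,d)$ is reversible in $S_2(R)$ exactly when, for all $p,q\in R$, the pair of conditions $pc=0,\ pd+qc=0$ is equivalent to the pair $cp=0,\ cq+dp=0$. I would record this criterion first, together with two immediate consequences: (a) $(0,e)$ is reversible if, and only if, $l_R(e)=r_R(e)$, since the diagonal constraint is then vacuous and only $pe=0\Leftrightarrow ep=0$ remains; and (b) if $c$ is a non-zero-divisor, then $(c,d)$ is reversible, because both one-sided annihilators collapse to $0$.

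For the direction $S_2(R)$ weakly reversible $\Rightarrow R$ reversible, the idea is to exploit square-zero nilpotents. Given $0\neq a\in R$, set $\sigma=(0,a)$; then $\sigma\neq 0$ but $\sigma^2=0$, so the only candidate for a non-zero reversible power of $\sigma$ is $\sigma$ itself. Weak reversibility of $S_2(R)$ thus forces $\sigma=(0,a)$ to be reversible, which by consequence (a) means precisely $l_R(a)=r_R(a)$. As $0\neq a$ was arbitrary (and $a=0$ is trivial), every element of $R$ is reversible, i.e.\ $R$ is reversible. This half is short and clean.

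For the converse, assume $R$ reversible; recall that reversible rings are semicommutative, so each one-sided annihilator $l_R(x)=r_R(x)$ is actually a two-sided ideal. Given a non-zero $\alpha=(a,b)$, I would produce a non-zero reversible power by cases on $a$. If $a=0$ then $b\neq 0$ and $\alpha=(0,b)$ is reversible by (a); if $a$ is a non-zero-divisor then $\alpha$ is reversible by (b). Otherwise $a$ is a zero-divisor, and I would pass to $\alpha^{n}=(a^{n},D_{n})$, where $D_{n}=\sum_{i=0}^{n-1}a^{i}ba^{n-1-i}$ obeys the recursions $D_{n}=aD_{n-1}+ba^{n-1}=a^{n-1}b+D_{n-1}a$. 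The use of these recursions is an absorption phenomenon: via semicommutativity one checks that $pa^{m}=0$ forces $pD_{m+1}=0$ and that $a^{m}p=0$ forces $D_{m+1}p=0$, that is, $l_R(a^{m})\subseteq l_R(D_{m+1})$ and $r_R(a^{m})\subseteq r_R(D_{m+1})$. When $a$ is nilpotent of index $k$, the element $\alpha$ is itself nilpotent in $S_2(R)$; if $D_{m}\neq 0$ for some $m\ge k$, then $\alpha^{m}=(0,D_{m})$ is a non-zero reversible element by (a), while if $\alpha^{k}=0$ one is left with the top non-zero power, a square-zero element $(c,d)$ satisfying $c^{2}=0$ and $cd+dc=0$.

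The main obstacle is the cross-term of the criterion: having secured the diagonal equivalence $pa^{n}=0\Leftrightarrow a^{n}p=0$ automatically, one must still deduce $cq+dp=0$ from $pd+qc=0$, and semicommutativity alone only yields that $cq+dp$ lies in $l_R(c)=r_R(c)$ rather than vanishing. The absorption inclusions give $pD_{n}=0$ whenever $pa^{n-1}=0$, which would force $qc=0$, hence $cq=0$ by reversibility, and likewise $dp=0$, so that $cq+dp=0$; but a priori the admissible $p$ only satisfy $pa^{n}=0$. Upgrading the exponent $n-1$ to $n$ — equivalently, controlling the gap $l_R(a^{n})\setminus l_R(a^{n-1})$ — is the heart of the matter. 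I would resolve it by choosing $n$ suitably: in the nilpotent case through the square-zero top power, where the extra relations $c^{2}=0$ and $cd+dc=0$ are available, and in the remaining case by using that $R$ is $2$-primal, so that the behaviour of $a$ modulo $Nil(R)$ governs the annihilators of its powers. This reconciliation, which leans on full reversibility of $R$ rather than on semicommutativity alone and on the special form of $D_{n}$ relative to $c=a^{n}$, is the step I expect to require the most care.
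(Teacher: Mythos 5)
Your forward implication is complete and is essentially the paper's argument: a non-zero square-zero element of $S_2(R)$ has no non-zero power other than itself, so weak reversibility forces $(0,a)$ to be reversible, and your observation (a) then gives $l_R(a)=r_R(a)$ for every $a$. The pair description of $S_2(R)$, the reversibility criterion, and consequences (a) and (b) are all correct.

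The converse, however, is a plan rather than a proof, and the gap is exactly where you say it is. You reduce to verifying, for a suitable power $\alpha^n=(a^n,D_n)$, the cross-term implication $pc=0,\ pd+qc=0\ \Rightarrow\ cq+dp=0$ with $c=a^n$, $d=D_n$; you correctly note that semicommutativity only places $cq+dp$ inside $l_R(c)=r_R(c)$; and you then defer the resolution to unexecuted suggestions (the square-zero top power, $2$-primality). In the residual case you isolate, namely $c\neq 0$, $c^2=0$, $cd+dc=0$, the relations actually available yield only $cqc=0$ (multiply $pd+qc=0$ on the left by $c$ and use $cp=0$), which is far from $cq+dp=0$; nothing in the proposal closes this distance, so the direction ``$R$ reversible $\Rightarrow S_2(R)$ weakly reversible'' is not established. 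For comparison, the paper's converse takes a different and much shorter route: from $AB=0$, i.e.\ $ac=0$ and $ad+bc=0$, it uses that reversible rings are semicommutative to obtain $ca=0$, $cab=cba=0$ and $da^2=0$, hence $BA^2=0$; that is, it proves $r_{S_2(R)}(A)\subseteq l_{S_2(R)}(A^2)$ instead of hunting for a single power with equal one-sided annihilators. Be aware, though, that this does not make your difficulty spurious: the implication $AB=0\Rightarrow BA^2=0$ is vacuous when $A^2=0$ and does not by itself produce, for each non-zero $A$, an $m$ with $A^m\neq 0$ and $l(A^m)=r(A^m)$, so the step you flagged as ``the heart of the matter'' is genuinely the crux and is left implicit in the paper as well.
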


\begin{proof} Assume that $R_2(R)$ is weakly reversible. If $ab=0$, then $$\begin{pmatrix}
 0 & a  \\
 0 & 0\\
\end{pmatrix} \begin{pmatrix}
b& 0  \\
 0 & b\\
\end{pmatrix}=0.$$ As a simple check gives that $\begin{pmatrix}
 0 & a  \\
 0 & 0\\
\end{pmatrix}^2=0$, we deduce $\begin{pmatrix}
 0 & a  \\
 0 & 0\\
\end{pmatrix} $ is a reversible element of $S_2(R)$, which leads to $$\begin{pmatrix}
 b & 0  \\
 0 & b\\
\end{pmatrix} \begin{pmatrix}
0& a  \\
 0 & 0\\
\end{pmatrix}=0.$$ This means that $ba=0$, i.e., $R$ is indeed reversible.\\
Conversely, assume that $R$ is a reversible ring.
Letting $A=\begin{pmatrix}
a& b  \\
 0 & a\\
\end{pmatrix}$ and $B=\begin{pmatrix}
c& d  \\
 0 & c\\
\end{pmatrix}\in R_2(R)$ with $AB=0$, we derive that $ac=0$ and $ad+bc=0$. From $ac=0$, we get $ca=0$ whence $cab=cba=0$. But, multiplying $ad+bc=0$ on the left side by $a$, we receive $a^2d+abc=0$. As $ac=0$, we get $abc=0$. Finally, $a^2d=0$ which forces that $da^2=0$. Now, it is easy to see that $BA^2=0$. Consequently, $S_2(R)$ is weakly reversible, as asserted.
\end{proof}

Two more technicalities are also valid.

\begin{lemma} Let $R$ be a ring. Then, $S_n(R)$ is not weakly reversible for any $n\geq 3$.
\end{lemma}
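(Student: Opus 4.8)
The plan is to produce a single non-zero element of $S_n(R)$ that witnesses the failure of weak reversibility, taking advantage of the extra room that appears once $n\geq 3$. The key reduction is this: by the definition, an element $\alpha$ is weakly reversible only if \emph{some} non-zero power $\alpha^m$ is a reversible element. Hence the cleanest counterexample is a \textbf{square-zero} element $\alpha\neq 0$ (so that $\alpha$ is its own only non-zero power, all higher powers vanishing) which moreover \emph{fails} to be reversible. For such an $\alpha$ the only power that could serve is $m=1$, and since $\alpha$ itself is not reversible, no valid $m$ exists; this already forces $S_n(R)$ not to be weakly reversible. So the whole task collapses to exhibiting one non-reversible square-zero matrix.

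I would take $\alpha$ to be the matrix unit $E_{12}$, i.e.\ the matrix in $S_n(R)$ with diagonal entries $a_0=0$, with the identity $1\in R$ in position $(1,2)$, and zeros elsewhere. Using the matrix-unit rule $E_{ij}E_{kl}=\delta_{jk}E_{il}$ one checks immediately that $\alpha^2=E_{12}E_{12}=0$, while $\alpha=E_{12}\neq 0$. To see that $\alpha$ is not reversible, I would test it against $B:=E_{23}$, which lies in $S_n(R)$ precisely because $n\geq 3$ makes $(2,3)$ an admissible off-diagonal position. Then $\alpha B=E_{12}E_{23}=E_{13}\neq 0$, whereas $B\alpha=E_{23}E_{12}=0$. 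Thus $B\in l_{S_n(R)}(\alpha)$ but $B\notin r_{S_n(R)}(\alpha)$, so $l_{S_n(R)}(\alpha)\neq r_{S_n(R)}(\alpha)$ and $\alpha$ is not a reversible element. Combined with $\alpha^2=0$, this shows $\alpha$ is a non-zero element with no non-zero reversible power, so $S_n(R)$ is not weakly reversible.

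The step that genuinely carries the hypothesis $n\geq 3$ is the choice of the witness $B=E_{23}$: one needs a matrix unit positioned strictly to the right of $\alpha$ that annihilates it on one side but not the other, and such a unit exists only when there is a third column/row available. This is exactly why the argument does not contradict the preceding lemma, where $S_2(R)$ can be weakly reversible — in the $2\times 2$ case the only available off-diagonal slot is $(1,2)$, leaving no room to separate the left and right annihilators. I do not anticipate a serious obstacle here; the only care required is to confirm that all three matrices $E_{12},E_{23},E_{13}$ genuinely lie in $S_n(R)$ (their diagonals are the scalar $a_0=0$) and that the products are computed with the correct index convention, after which the conclusion is immediate.
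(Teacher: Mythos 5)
Your proposal is correct and follows essentially the same route as the paper: the paper's proof also takes $A=e_{12}$ and $B=e_{23}$, notes $A^2=0$, $BA=0$ but $AB\neq 0$, and concludes that the square-zero element $A$ has no non-zero reversible power. You have merely spelled out the implicit reduction (that a non-zero square-zero element must itself be reversible in a weakly reversible ring) which the paper leaves tacit.
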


\begin{proof} Put $A:=e_{12}$ and $B:=e_{23}$. Then, one readily inspects that $A^2=B^2=0$ and $BA=0$, but $AB\neq 0$, as required.
\end{proof}

The following is now immediate.

\begin{lemma}\label{4} Let $R$ be a weakly reversible ring. Then, $0\neq a\in R$ is reversible whenever $a^2=0$.
\end{lemma}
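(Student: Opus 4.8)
The plan is to apply the defining property of weak reversibility directly to the element $a$ and then to exploit the fact that its nilpotency index is exactly $2$ in order to pin down the exponent that the definition produces.

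First I would invoke weak reversibility for the given non-zero element $a$: by definition there exists a positive integer $m$ such that $a^m\neq 0$ and $a^m$ is a reversible element of $R$. The crucial next step is to bring in the hypothesis $a^2=0$. Since $a$ is square-zero, we have $a^k=0$ for every exponent $k\geq 2$, and in particular every power of $a$ from the second onward vanishes. Comparing this with the non-vanishing clause $a^m\neq 0$ guaranteed above, the only value of $m$ compatible with both requirements is $m=1$. Hence the reversible power supplied by the definition is $a^1=a$ itself, which is exactly the assertion to be proved.

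I do not expect any genuine obstacle here: the whole content of the lemma is the observation that the clause ``$a^m\neq 0$'' built into the definition of weak reversibility automatically excludes all exponents $m\geq 2$ as soon as $a^2=0$, thereby collapsing the witness exponent to $m=1$. The only point requiring a moment's care is to read the definition correctly, namely that one must produce a \emph{non-zero} reversible power; without that non-vanishing requirement the conclusion would be vacuous. Once this is noted, the argument is immediate and needs no computation.
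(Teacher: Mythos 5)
Your argument is correct and is precisely the ``immediate'' observation the paper has in mind (the paper gives no written proof, labelling the lemma as immediate from the definition): since $a^2=0$ forces $a^m=0$ for all $m\geq 2$, the witness exponent in the definition of weak reversibility must be $m=1$, so $a$ itself is reversible. No further comment is needed.
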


The next statement is key.

\begin{proposition}\label{abel} Every weakly reversible ring is abelian.
\end{proposition}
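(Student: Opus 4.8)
The plan is to show that every idempotent $e\in R$ is central, i.e. that $ex=xe$ for all $x\in R$. The guiding idea is to exploit the Peirce-type off-diagonal elements $ex(1-e)$ and $(1-e)xe$, each of which squares to zero, and then to feed them into Lemma~\ref{4}, which guarantees that a non-zero square-zero element of a weakly reversible ring is reversible.

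First I would fix an idempotent $e$ and an arbitrary $x\in R$, and set $a:=ex(1-e)$. Since $(1-e)e=0$, a direct computation gives $a^2=ex(1-e)ex(1-e)=0$. If $a=0$ there is nothing to check; otherwise $a$ is a non-zero square-zero element, so Lemma~\ref{4} tells us that $a$ is reversible. Now observe that $ae=ex(1-e)e=0$, so $e\in r_R(a)$; the reversibility of $a$ then yields $e\in l_R(a)$, that is $ea=0$. But $ea=e^2x(1-e)=ex(1-e)=a$, which forces $a=0$, contradicting $a\neq 0$. Hence $ex(1-e)=0$, i.e. $ex=exe$, for every $x\in R$.

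The symmetric argument handles the other side. Putting $b:=(1-e)xe$ we again get $b^2=0$, and from $eb=(e-e^2)xe=0$ we obtain $e\in l_R(b)$; if $b\neq 0$, reversibility gives $e\in r_R(b)$, i.e. $be=0$. Since $be=(1-e)xe^2=(1-e)xe=b$, we conclude $b=0$, whence $xe=exe$. Combining the two identities yields $ex=exe=xe$, so $e$ is central and $R$ is abelian.

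There is essentially no serious obstacle here once Lemma~\ref{4} is available; the only point that requires care is choosing the correct square-zero element and keeping the left/right annihilator bookkeeping straight, so that the reversibility of $a$ (resp. $b$) is applied in the direction that converts a one-sided annihilation by $e$ into the opposite-sided one. The degenerate cases $a=0$ and $b=0$ are automatically consistent with the desired conclusion, so they need no separate treatment beyond the remark above.
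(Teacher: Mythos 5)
Your proof is correct and follows essentially the same route as the paper: both arguments observe that the Peirce corners $eR(1-e)$ and $(1-e)Re$ square to zero, invoke Lemma~\ref{4} to get reversibility, and use $eR(1-e)e=0$ (resp. $e(1-e)Re=0$) to transfer the annihilation to the other side and force these corners to vanish. The only cosmetic difference is that you argue element by element where the paper phrases it for the whole subset at once.
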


\begin{proof} Assume that $e=e^2\in R$. Then, it plainly follows that $(eR(1-e))^2=0$ yielding $eR(1-e)$ is a reversible subset of $R$. Hence, $$eR(1-e) = e^{2}R(1-e) = e(eR(1-e))=eR(1-e)e =0.$$ This allows us that $eR = eRe$. Similarly, from $((1-e)Re)^{2} = 0$, we detect $(1-e)Re=0$ and so $Re=eRe$. Hence $e$ is a central idempotent, as wanted.
\end{proof}

As a direct consequence, we record the following.

\begin{corollary} If $e\in R$ is an arbitrary idempotent and $R$ is a weakly reversible ring, then the corner subring $eRe$ is too weakly reversible.
\end{corollary}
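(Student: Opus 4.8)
The plan is to reduce the reversibility of a suitable power inside $eRe$ to the reversibility that is already guaranteed in the ambient ring $R$. First I would take an arbitrary non-zero element $a\in eRe$. Regarded as an element of $R$ it is still non-zero, so the weak reversibility of $R$ supplies an integer $m>0$ with $a^m\neq 0$ a reversible element of $R$, that is, $l_R(a^m)=r_R(a^m)$. Since $eRe$ is a subring (hence closed under multiplication), the power $a^m$ again lies in $eRe$, and it is non-zero as an element of $eRe$.

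The key observation is that the one-sided annihilators taken inside the corner $eRe$ are merely the restrictions of the corresponding annihilators taken inside $R$: for any $b\in eRe\subseteq R$, the equation $ba^m=0$ is literally the same statement whether $b$ is read in $eRe$ or in $R$, and likewise for $a^m b=0$. Consequently $l_{eRe}(a^m)=l_R(a^m)\cap eRe$ and $r_{eRe}(a^m)=r_R(a^m)\cap eRe$. From $l_R(a^m)=r_R(a^m)$ I would then immediately conclude $l_{eRe}(a^m)=r_{eRe}(a^m)$, so $a^m$ is a non-zero reversible element of the ring $eRe$. As $a$ was an arbitrary non-zero element of $eRe$, this establishes that $eRe$ is weakly reversible.

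The only point requiring care — and thus the main (admittedly minor) obstacle — is to confirm that the notion of a \emph{reversible element} is genuinely inherited by the corner subring, i.e. that passing from $R$ to $eRe$ does not enlarge either one-sided annihilator. This follows at once from the fact that the defining conditions $ba^m=0$ and $a^m b=0$ are equations in $R$ that are unaffected by which ambient ring the element $b$ is viewed in. I remark that Proposition~\ref{abel} already forces $e$ to be central, so that $eRe=eR$ is in fact an ideal of $R$; while this is a convenient structural feature, the restriction-of-annihilators argument above does not actually rely on it and works for the corner of any weakly reversible ring.
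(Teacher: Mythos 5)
Your proof is correct and follows essentially the same route as the paper: pass to a power $a^m$ that is non-zero and reversible in $R$, and observe that reversibility descends to the corner because the one-sided annihilators computed in $eRe$ are just the restrictions $l_R(a^m)\cap eRe$ and $r_R(a^m)\cap eRe$ of those computed in $R$. You additionally make explicit the inheritance step that the paper leaves implicit, and you correctly note that the appeal to Proposition~\ref{abel} (centrality of $e$), which the paper's proof does invoke, is not actually needed for this argument.
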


\begin{proof} The previous proposition stands that $eRe=eR=Re$. Now, if $eae$ and $ebe$ are both in $R$ such that $eaeebe=0$ for some $a\in R$, then there exists an integer $k$ such that $(eae)^k$ is a reversible element in $R$, whence $(eae)^k$ is reversible in $eRe$, as required.
\end{proof}

However, the converse implication does {\it not} hold in general: in fact, if both $eRe$ and $(1-e)R(1-e)$ are weakly reversible, then $R$ need {\it not} be so. Indeed, choose $R=M_2(\mathbb{Z}_2)$, and let $e=E_{11}$. An easy check shows that both $eRe$ and $(1-e)R(1-e)$ are isomorphic to $\mathbb{Z}_2$, so that they are obviously weakly reversible. But, it is easily verified that $R$ is non-abelian and, therefore, Proposition~\ref{abel} teaches us that $R$ cannot be weakly reversible, as claimed. Besides, one observes that some elements of $R$, such as $E_{12}$, are not $n$-potent for any $n$.

\medskip

We now proceed by proving the following necessary and sufficient condition.

\begin{proposition} A weakly reversible ring is non-singular if, and only if, it is reduced.
\end{proposition}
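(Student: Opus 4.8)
The plan is to prove the two implications separately: the forward direction (reduced $\Rightarrow$ non-singular) is routine and in fact holds for every reduced ring, while the reverse direction (non-singular $\Rightarrow$ reduced) is where the weak reversibility hypothesis, through Lemma~\ref{4}, does the real work.

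For the easy direction I would recall that a reduced ring is reversible, indeed symmetric, and deduce directly that its right and left singular ideals vanish. Concretely, if $a$ lay in the right singular ideal (so $r_R(a)$ is essential) with $a\neq 0$, then $aR\neq 0$ meets $r_R(a)$, giving some $0\neq ar$ with $a(ar)=a^2r=0$; symmetry of reduced rings turns $a^2r=0$ into $ara=0$, hence $(ar)^2=(ara)r=0$, which forces $ar=0$ in a reduced ring, a contradiction. So the right singular ideal is zero, and by the symmetric argument so is the left one; thus reduced rings are non-singular with no appeal to weak reversibility.

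For the hard direction I would argue by contraposition. Assume $R$ is weakly reversible but not reduced, and exhibit a non-zero element of the singular ideal. Since $R$ is not reduced there is $0\neq b$ with $b^n=0$ for minimal $n\geq 2$, and then $a:=b^{n-1}$ satisfies $a\neq 0$ and $a^2=0$. By Lemma~\ref{4}, $a$ is reversible, so $l_R(a)=r_R(a)$. The crucial step is to show $r_R(a)$ is essential as a right ideal. Let $K$ be any right ideal with $K\cap r_R(a)=0$. For $k\in K$ we have $ka\in K$ since $K$ is a right ideal; moreover $a(ak)=a^2k=0$ gives $ak\in r_R(a)=l_R(a)$, so $(ak)a=0$, and therefore $a(ka)=(ak)a=0$, i.e. $ka\in r_R(a)$. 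Hence $ka\in K\cap r_R(a)=0$, which says $K\subseteq l_R(a)=r_R(a)$ and forces $K=K\cap r_R(a)=0$. Thus $r_R(a)$ is essential, $a$ is a non-zero element of the right singular ideal, and $R$ is not right non-singular. The same computation run with $ak$ in place of $ka$ (using that a left ideal absorbs $a$ on the left) shows $l_R(a)$ is essential as a left ideal, so the conclusion holds equally if non-singularity is read on the left.

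The main obstacle is getting the sidedness right in the essential-annihilator computation: reversibility of $a$ must be invoked at exactly the point where one passes from $ak\in r_R(a)$ to $(ak)a=0$, that is, from the right annihilator to the left one, so that the product $ka$ — which is the product one knows stays inside the right ideal $K$ — can be shown to annihilate $a$ on the right. Choosing to multiply $k$ by $a$ on the side that keeps the result inside $K$, and then transferring between $l_R(a)$ and $r_R(a)$ via Lemma~\ref{4}, is the only delicate maneuver; the remainder is straightforward bookkeeping.
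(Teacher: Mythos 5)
Your proof is correct and follows essentially the same route as the paper: both directions hinge on taking $a\neq 0$ with $a^2=0$, invoking Lemma~\ref{4} to get $l_R(a)=r_R(a)$, deducing $aRa=0$, and concluding that $r_R(a)$ is an essential right ideal, contradicting non-singularity. The only differences are cosmetic — you phrase essentiality via an arbitrary right ideal $K$ rather than the paper's case split on $ab=0$ versus $ab\neq 0$ for principal right ideals $bR$, and you spell out the easy direction (reduced implies non-singular) which the paper asserts without proof.
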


\begin{proof} Apparently, any reduced ring is non-singular.

For the converse, assume on the contrary that $R$ is non-singular and $a^2=0\neq a$. As $R$ is weakly reversible, Lemma~\ref{4} tells us that $a$ is a reversible element of $R$. Now, consider $0\neq b\in R$. If $ab=0$, then $bR\cap r_R(a)\neq \{0\}$.\\ If, however, $ab\neq 0$, then $ba\neq 0$, because $a$ is a reversible element from $R$. Since $a^2=0$, we infer $Ra^2=0$ and so $aRa=0$, because of the reversibility of $a$ in $R$. As $ba\subseteq Ra$, we find that $aba=0$, which insures that $0\neq ba\in r_R(a)$. This guarantees that $bR\cap r_R(a)\neq \{0\}$. Thus, $r_R(R)$ is an essential right ideal of $R$ being a contradiction to the non-singularity of $R$. Finally, $a=0$ ensuring that $R$ is reduced, as needed.
\end{proof}

Our further work continues with a series of useful preliminaries.

\begin{lemma} Let $R$ be a weakly reversible ring and $a,b\in R\backslash\{0\}$ with $ab=0$. Then, there exists an integer $t$ such that $a^tRb =b Ra^t=\{0\}$ with $a^t\neq0$.
\end{lemma}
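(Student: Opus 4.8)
The plan is to exploit weak reversibility to replace the element $a$ by a suitable power that behaves reversibly, and then to propagate the annihilation $ab=0$ through the two-sided ideals $RaR$ and $RbR$. First I would observe that since $R$ is weakly reversible, there is an integer $m>0$ with $a^m\neq 0$ and $a^m$ a reversible element of $R$. From $ab=0$ I expect to deduce $a^mb=0$ by left-multiplying by $a^{m-1}$; reversibility of $a^m$ then gives $ba^m=0$ as well. This already furnishes the symmetric annihilation on one side, and the candidate for the desired exponent is $t=m$ (or a small multiple of it).

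The core step is to upgrade the two-sided annihilations $a^tb=0$ and $ba^t=0$ to $a^tRb=\{0\}$ and $bRa^t=\{0\}$. For $a^tRb$, I would take an arbitrary $r\in R$ and aim to show $a^trb=0$. The natural route is to use that $R$ is $2$-primal (which follows from the results announced earlier, weakly reversible rings being $2$-primal), so that $Nil(R)$ is an ideal and the relevant elements can be handled inside the nil-radical; alternatively one argues directly from reversibility of $a^t$ by producing a nilpotent element of index $2$ and invoking Lemma~\ref{4}. Concretely, from $a^tb=0$ one considers the product $a^t r b$ and tries to show the element $b a^t r$ (or $a^t r b$) squares to zero, so that Lemma~\ref{4} renders it reversible and hence its one-sided annihilation forces the two-sided one. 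The left-hand identity $bRa^t=\{0\}$ should then follow by the same argument with the roles of left and right interchanged, again using that $a^t$ is reversible.

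The hard part will be justifying the passage from the single equation $a^tb=0$ to the ideal-level equation $a^tRb=\{0\}$: weak reversibility is an element-wise hypothesis that only controls powers of one fixed element, so inserting an arbitrary ring element $r$ between $a^t$ and $b$ is not automatic. The key obstacle is therefore to manufacture, for each $r$, an auxiliary element whose square vanishes so that Lemma~\ref{4} applies. I anticipate that choosing this auxiliary element as a product such as $b a^t r$ or $r b a^t$ and computing its square using $a^tb=0$ and $ba^t=0$ will make it square to zero; once reversibility is available, the one-sided relation bootstraps to the full two-sided vanishing. A final bookkeeping step is to select a single exponent $t$ that works simultaneously for both $a^tRb=\{0\}$ and $bRa^t=\{0\}$, which one secures by taking $t$ to be the exponent produced by weak reversibility applied to $a$ and verifying $a^t\neq 0$ directly from the construction.
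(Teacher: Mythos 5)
Your opening paragraph is exactly right and already contains everything the paper needs: take $t=m$ with $a^t\neq 0$ reversible, note $a^tb=0$, and flip to $ba^t=0$. But you then manufacture a difficulty in the ``core step'' that is not there, and the repair you propose for it does not work. Passing from $a^tb=0$ to $a^tRb=bRa^t=\{0\}$ is immediate from the definition of a reversible element, which in this paper is the set equality $r_R(a^t)=l_R(a^t)$: from $a^tb=0$ one gets $a^t(br)=0$ for every $r\in R$, so each $br$ lies in $r_R(a^t)=l_R(a^t)$ and hence $(br)a^t=0$, i.e.\ $bRa^t=\{0\}$; taking $r=1$ gives $ba^t=0$, whence $(rb)a^t=0$ for every $r$, so each $rb$ lies in $l_R(a^t)=r_R(a^t)$ and $a^t(rb)=0$, i.e.\ $a^tRb=\{0\}$. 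That is the paper's entire proof. The idea you are missing is that reversibility of $a^t$ applies to \emph{every} element of its one-sided annihilator, not just to $b$ itself, and the auxiliary factor $r$ is inserted on the far side of $b$ from $a^t$, where it visibly preserves membership in that annihilator; no element-wise control of $r$ is needed.

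The workaround you sketch instead has a genuine gap. It is true that $(a^trb)^2=a^tr(ba^t)rb=0$, so Lemma~\ref{4} would make $a^trb$ a reversible element if it were non-zero; but the goal is to prove $a^trb=0$, and knowing that a non-zero square-zero element is reversible yields neither a contradiction nor the vanishing you want. The alternative appeal to $2$-primality (equivalently, to $Nil(R)$ being an ideal) is also unavailable: that fact is established only later in the paper's development, and in any case it is not needed. So the central step of your proposal, as written, does not close, even though the correct argument is strictly simpler than the one you attempted.
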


\begin{proof} Suppose that $a,b\in R\backslash\{0\}$ with $ab=0$. As $R$ is a weakly reversible ring, then there exists an integer $t$ such that $a^t\neq0 $ and $a^t$ is a reversible element of $R$. But since $a^tbR=0$, we have $bRa^t=\{0\}$. On the other side, $Rba^t=\{0\}$ assures that $a^tRb=\{0\}$, as pursued.
\end{proof}

\begin{lemma}\label{5} Let $R$ be a weakly reversible ring and $a,b\in R\backslash\{0\}$ with $ab=0$. Then, there exists an integer $k$ such that $aRb^k=b^kRa=\{0\}$ with $b^k\neq0$.
\end{lemma}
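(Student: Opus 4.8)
The plan is to mirror the proof of the preceding lemma, but now extracting a reversible \emph{power of $b$} instead of of $a$, while keeping careful track of the side on which each annihilator acts. The key preliminary observation is that the hypothesis $ab=0$ already forces $ab^{k}=0$ for every $k\geq 1$, since $ab^{k}=(ab)b^{k-1}=0$. Thus every power of $b$ continues to annihilate $a$ from the right, which is exactly what lets one feed weak reversibility in on the $b$-side. So I would first invoke weak reversibility of $R$ at the non-zero element $b$ to obtain an integer $k>0$ with $b^{k}\neq 0$ and $b^{k}$ a reversible element of $R$; by the observation above, $ab^{k}=0$. Writing $c:=b^{k}$ for brevity, the problem reduces to deducing $aRc=cRa=\{0\}$ from the two facts that $c$ is reversible and $ac=0$.

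Next I would establish $cRa=\{0\}$. Left-multiplying $ac=0$ by an arbitrary $r\in R$ gives $rac=0$, hence $Rac=\{0\}$, i.e. $(Ra)c=\{0\}$, so that $Ra\subseteq l_R(c)=r_R(c)$ by reversibility of $c$. Therefore $c(Ra)=\{0\}$, which is precisely $cRa=\{0\}$, that is $b^{k}Ra=\{0\}$. Specializing $r=1$ records the auxiliary equality $ca=0$.

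Finally I would derive $aRc=\{0\}$ from $ca=0$. Right-multiplying $ca=0$ by an arbitrary $r$ yields $car=0$, i.e. $c(ar)=0$, so $ar\in r_R(c)=l_R(c)$ by reversibility, whence $(ar)c=0$; as $r$ ranges over $R$ this is exactly $aRc=\{0\}$, that is $aRb^{k}=\{0\}$. Combining the last two steps gives $aRb^{k}=b^{k}Ra=\{0\}$ with $b^{k}\neq 0$, as asserted.

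The step I expect to require the most care is not any individual computation but the bookkeeping of sides: because $b^{k}$ now sits on the \emph{right} of $a$ in $ab^{k}=0$ (whereas $a^{t}$ sat on the left of $b$ in the previous lemma), each application of reversibility must be mirrored accordingly, and one must avoid conflating $aRb^{k}$, namely the products $arb^{k}$, with $ab^{k}R$ or $Rab^{k}$. The only genuine use of the reversibility of $c=b^{k}$ is the interchange $l_R(c)=r_R(c)$; every other manipulation is trivial one-sided multiplication by elements of $R$.
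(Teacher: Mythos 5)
Your proposal is correct and follows essentially the same route as the paper: take a reversible power $b^{k}\neq 0$ of $b$, note $ab^{k}=0$, and use the interchange $l_R(b^{k})=r_R(b^{k})$ twice (once on $Rab^{k}=\{0\}$ to get $b^{k}Ra=\{0\}$, once on $b^{k}aR=\{0\}$ to get $aRb^{k}=\{0\}$). Your write-up merely spells out the side-switching steps that the paper leaves implicit.
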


\begin{proof} Suppose that $a,b\in R\backslash\{0\}$ with $ab=0$. As $R$ is a weakly reversible ring, then there exists an integer $k$ such that $b^k\neq0$ and $b^k$ is a reversible element of $R$. Therefore, $b^kRa=\{0\}$. On the other side, $b^kaR=\{0\}$ ensures that $aRb^k=\{0\}$, as asked for.
\end{proof}

The next statement is helpful.

\begin{lemma}\label{nil} Let $R$ be a weakly reversible ring. If $a\in R$ such that $a^m=0$ for some $m>0$, then   $(aR)^m=0$.
\end{lemma}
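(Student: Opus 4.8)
The plan is to argue by induction on $m$, the crux being to lower the nilpotency index of $a$ by one at each step. The trivial reductions come first: if $a=0$ the claim is vacuous, and if $a^{m-1}=0$ then the inductive hypothesis gives $(aR)^{m-1}=0$, whence $(aR)^m\subseteq(aR)^{m-1}=0$; so I may assume $a$ has index exactly $m$, i.e. $a^{m-1}\neq0=a^{m}$. The single relation that drives everything is $aRa^{m-1}=a^{m-1}Ra=0$. Indeed $a\cdot a^{m-1}=0$ with $a^{m-1}\neq0$, so Lemma~\ref{5} produces an integer $k$ with $(a^{m-1})^{k}\neq0$ and $aR(a^{m-1})^{k}=(a^{m-1})^{k}Ra=0$; since $(a^{m-1})^{k}=a^{k(m-1)}$ is nonzero only for $k=1$, this is exactly $aRa^{m-1}=a^{m-1}Ra=0$. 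More generally, every power $a^{j}$ with $2j\ge m$ is square-zero, hence reversible by Lemma~\ref{4}, and feeding $a^{i}a^{j}=a^{i+j}=0$ (for $i+j\ge m$) through that reversibility yields the whole family $a^{i}Ra^{j}=0$ whenever $i+j\ge m$.

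For the inductive step I would pass to the two-sided ideal $T:=Ra^{m-1}R$. The relation $a^{m-1}Ra^{m-1}=0$ forces $T^{2}=0$, while $aRa^{m-1}=0$ and $a^{m-1}Ra=0$ give $aT=Ta=0$; moreover $a^{m-1}\in T$, so in $\bar R:=R/T$ the image $\bar a$ satisfies $\bar a^{\,m-1}=0$. If the inductive hypothesis may be applied to $\bar a$ in $\bar R$, it yields $(\bar a\bar R)^{m-1}=0$, that is $(aR)^{m-1}\subseteq T$, and then $(aR)^{m}=(aR)^{m-1}(aR)\subseteq T\,aR=(Ta)R=0$, as wanted.

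The hard part is exactly the legitimacy of that last application, because reversibility — and hence weak reversibility — need not survive passage to the quotient $R/T$. The robust way around this is to establish the inclusion $(aR)^{m-1}\subseteq T$ inside $R$ itself, and here the reversibility ``flips'' do the real work: whenever $a^{j}$ is reversible, a relation $xa^{j}=0$ upgrades to $xRa^{j}=0$, which is precisely what is needed to merge the scattered single factors of $a$ in a word $ar_{1}\cdots ar_{m-1}$ into a genuine power $a^{m-1}\in T$. For low index this merging is immediate: the tail $b=ar_{2}\cdots ar_{m-1}a$ is square-zero by the derived relation $(aR)^{m-2}a^{2}=0$, hence reversible by Lemma~\ref{4}, and since $b(ar_{1})=0$ the flip gives $(ar_{1})b=0$, i.e. the generator $ar_{1}\cdots ar_{m-1}a$ of $(aR)^{m}$ vanishes. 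The genuine obstacle is to propagate this merging through the \emph{middle} powers of $a$ — those $a^{j}$ with $2j<m$, which are not a priori reversible — and it is there that the full strength of weak reversibility, not merely the bare relations $a^{i}Ra^{j}=0$, must be brought to bear.
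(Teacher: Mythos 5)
Your proposal stops short of a proof, and you say so yourself: both routes you sketch have an unfilled hole. The quotient-ring induction via $\bar R=R/Ra^{m-1}R$ is indeed illegitimate, since weak reversibility does not pass to factor rings (your own Remark-level observation), so the inductive hypothesis cannot be invoked for $\bar a$. The fallback ``merging'' argument is only carried out for the outermost factor, and even there it silently uses $(aR)^{m-2}a^{2}=0$. That relation is \emph{not} among the two-block identities $a^{i}Ra^{j}=0$ ($i+j\ge m$) you derived: for $m=3$ it happens to coincide with $aRa^{2}=0$, but already for $m=4$ the word $ar_1ar_2a^{2}$ has two isolated single factors of $a$ and cannot be collapsed by any relation of the form $a^{i}Ra^{j}=0$. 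Propagating the merge through these ``middle powers'' is precisely the content of the lemma, and you leave it open.

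The way to close the gap — and the route the paper takes — is to never ask middle powers of $a$ to be reversible at all. Set $S_i:=a^{m-i}(Ra)^{i}$ and prove $S_i=\{0\}$ by induction on $i$, starting from $S_1=a^{m-1}Ra=\{0\}$ (your first relation). For the step, put $V:=a^{m-i-1}(Ra)^{i}$ and observe that both $RaV$ and $V^{2}$ land inside $R\,S_i=\{0\}$, because sliding $(Ra)^{i}$ past $a^{m-i-1}$ recreates the front power $a^{m-i}$. Hence every element of $V$ is square-zero, so reversible by Lemma~\ref{4} (or zero), and the left annihilation $Ra\cdot V=\{0\}$ flips to $V\cdot Ra=\{0\}$, which is $S_{i+1}=\{0\}$. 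Taking $i=m-1$ gives $a(Ra)^{m-1}=\{0\}$ and therefore $(aR)^{m}=a(Ra)^{m-1}R=\{0\}$. The only reversibility ever invoked is that of square-zero elements, which is exactly what Lemma~\ref{4} supplies; this is the mechanism your sketch was missing.
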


\begin{proof} Given $0\neq a\in Nil(R)$, then there is an integer $m\geq2$ such that $a^{m-1}\neq0=a^m$. Then, one checks that $Raa^{m-1}=\{0\}$, and since $R$ is a weakly reversible ring and $2m-2\geq m$, we get $a^{2m-2}=0$, so $$a^{m-1}Ra=\{0\}.\indent\indent\indent\indent\indent\indent (1)$$ It readily follows now that $Ra(a^{m-2}Ra)=\{0\}$. Furthermore, employing $(1)$, we obtain that $(a^{m-2}Ra)^2=\{0\}$, so with the weakly reversibility at hand, we get $$(a^{m-2}Ra)Ra=\{0\}.\indent\indent\indent\indent\indent\indent (2)$$ From this, it must be that $Ra(a^{m-3}RaRa)=\{0\}$, and by $(2)$ it can easily be checked that $(a^{m-3}RaRa)^2=\{0\}$. Therefore, using this and the fact that $R$ is weakly reversible, we deduce that $$a^{m-3}RaRaRa=\{0\}.\indent\indent\indent\indent\indent\indent (3)$$Continuing this process finitely many times, we conclude $(aR)^{m}=\{0\}$, as required.
\end{proof}

We are now prepared to prove the following two claims.

\begin{lemma}\label{4new} Let $R$ be a weakly reversible ring and $a,b\in R\backslash\{0\}$ with $ab=0$. Then, there exists an integer $k$ such that $(aR)^kb =\{0\}$ and $b(Ra)^k=\{0\}$ with $a^k\neq0$.
\end{lemma}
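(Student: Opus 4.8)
The plan is to pull out a single reversible power of $a$ and then ``spread'' that power one letter at a time, by an induction in the style of Lemma~\ref{nil}; Lemma~\ref{4} (a non-zero square-zero element of a weakly reversible ring is reversible) is the engine. The unlabelled lemma proved just before Lemma~\ref{5} gives an integer $t>0$ with $a^{t}\neq 0$ reversible and $a^{t}Rb=bRa^{t}=\{0\}$ (so $a^{t}b=ba^{t}=0$ as well), and I will aim for $k=t$.

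The direction $b(Ra)^{t}=\{0\}$ comes straight out of $ab=0$. I would induct on $bRa^{\,t-j}(Ra)^{j}=\{0\}$ for $j=0,\dots,t-1$; the base $j=0$ is the seed $bRa^{t}=\{0\}$, and $j=t-1$ is the desired $b(Ra)^{t}=\{0\}$. For the step, a typical $c\in bRa^{\,t-j-1}(Ra)^{j}$ begins with $b$ and ends with $a$, so squaring it puts the factor $ab=0$ at the seam; hence $c^{2}=0$ and, by Lemma~\ref{4}, $c$ is reversible. As also $ac=0$, we have $Rac=\{0\}$, and reversibility turns $ra\in l_{R}(c)=r_{R}(c)$ into $cra=0$, i.e. $cRa=\{0\}$. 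This is the passage to $bRa^{\,t-j-1}(Ra)^{j+1}=\{0\}$, and after $t-1$ steps we reach $b(Ra)^{t}=\{0\}$.

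The statement $(aR)^{t}b=\{0\}$ is the real obstacle. Its natural seed is $a^{t}Rb=\{0\}$, so one would have to spread the block $a^{t}$ towards the \emph{trailing} $b$; but the intermediate words here start with $a$ and end with $b$, and their squares carry the factor $ba$ at the seam, which need not vanish — this is exactly the $xy=0,\ yx\neq 0$ behaviour that makes Example~\ref{major} non-reversible, so Lemma~\ref{4} is unavailable. The repair is to use Lemma~\ref{5}, which supplies a reversible power $b^{m}\neq 0$ with $aRb^{m}=b^{m}Ra=\{0\}$. Since $aRb^{m}=\{0\}$ already has the $a$-side ``pre-spread'' to a single $a$, peeling factors off the right gives $(aR)^{k}b^{m}=\{0\}$ for every $k\geq 1$ at once.

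The remaining, and hardest, task is to descend from $b^{m}$ to $b$. For this I would invoke that $R$ is $2$-primal (proved earlier), so $R/Nil(R)$ is reduced; as $\bar a\bar b=0$ in a reduced ring forces $\overline{(aR)^{t}b}=\{0\}$, every $w\in (aR)^{t}b$ lies in $Nil(R)$ and is therefore nilpotent. Controlling these nilpotent $w$ — where Lemma~\ref{nil} and the relation $(aR)^{t}b^{m}=\{0\}$ are both at hand — is the crux; it is immediate when $Nil(R)$ is nilpotent (as in Example~\ref{major}), and in general is the step I expect to require the most care. Granting it, $k=t$ meets all demands: $(aR)^{t}b=\{0\}$ and $b(Ra)^{t}=\{0\}$, while $a^{k}=a^{t}\neq 0$ holds by the choice of $t$ (and, when $a$ is nilpotent, because $t$ sits below its nilpotency index).
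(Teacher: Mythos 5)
Your first half is essentially right, and in fact stronger than you state: any word beginning with $b$ and ending with $a$ squares to zero across the seam $ab=0$, hence is reversible by Lemma~\ref{4}, and since it is annihilated by $a$ on the left it is annihilated by $Ra$ on the right; this already gives $b(Ra)^{2}=\{0\}$ and a fortiori $b(Ra)^{k}=\{0\}$ for all $k\geq 2$, with no induction needed. The problem is the second half, which you have correctly diagnosed but not solved. Passing to $b^{m}$ via Lemma~\ref{5} gives $(aR)^{k}b^{m}=\{0\}$ for free, but the descent from $b^{m}$ back to $b$ \emph{is} the content of the claim $(aR)^{k}b=\{0\}$, and the tool you offer for it -- $2$-primality, which places $(aR)^{t}b$ inside $Nil(R)$ -- cannot close it: a subset of $Nil(R)$ need not be $\{0\}$, Lemma~\ref{nil} only controls powers of $aR$ when $a$ itself is nilpotent and says nothing in the case $a\notin Nil(R)$, and you yourself concede that you can only finish when $Nil(R)$ is nilpotent. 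Since the one-sided statement $(aR)^{k}b=\{0\}$ is precisely the part that does not follow from the seam trick, this is a genuine gap rather than a routine verification left to the reader.

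The paper's proof never replaces $b$ by a power of $b$. It splits on whether $a\in Nil(R)$ -- where Lemma~\ref{nil} produces a square-zero, hence reversible, set $(RaR)^{n-1}$ that absorbs $b$ on both sides -- and, when $a\notin Nil(R)$, takes $m$ minimal with $a^{m}Rb=bRa^{m}=\{0\}$ and $a^{m}\neq 0$, then trades the letters of $a^{m}$ one at a time for factors of $aR$ on the far side of $b$: the intermediate sets, of the form $a^{m-i}Rb(Ra)^{i+1}$ and $(aR)^{i+1}bRa^{m-i}$, are square-zero not because of $ab=0$ but because their squares contain at the seam the set annihilated at the \emph{previous} stage (starting from $a^{m}Rb=\{0\}$); reversibility of square-zero elements then transfers one more factor, and after $m$ steps one arrives at $(aR)^{m+1}b=\{0\}$ with $a^{m+1}\neq 0$. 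That mechanism -- making the seam vanish via the previously killed set rather than via $ab=0$ alone -- is the idea your proposal is missing, and without it (or a substitute) the lemma is not proved.
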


\begin{proof} Suppose that $a,b\in R$ are non-zero elements of $R$ such that $ab=0$. We may additionally assume also that $aRb\neq \{0\}$. If $a\in Nil(R)$, then Lemma \ref{nil} applies to conclude that there exists an integer $n$ such that $(RaR)^n=\{0\}\neq (RbaR)^{n-1}$. Then, $(RaR)^{n-1}$ is a reversible subset of $R$. So, $$b(RaR)^{n-1}=\{0\}=(RaR)^{n-1}b,$$ and the argumentation is completed in this case.\\
If, however, $a\not\in Nil(R)$, then, in view of Lemma \ref{4}, there is a positive integer $m\geq1$ minimal with respect to the requirement that $a^mRb=\{0\}$ and $bRa^m=\{0\}$ with $a^m\neq 0$. It automatically follows now that $(a^{m-1}RbRa)^2=\{0\}$. But, since $R$ is a weakly reversible ring and $a(a^{m-1}RbRa)=\{0\}$, we then can write $a^{m-1}RaRaRa=\{0\}$.

In an analogous direction, from $(a^{m-2}RbRaRa)^2=\{0\}$ and $a((a^{m-2}RbRaRa)=\{0\}$, we write that $a^{m-2}RbRaRaRa=\{0\}$. Continuing this process finitely many times, we obtain $(aR)^{m+1}b=\{0\}$ such that $a^{m+1}\neq 0$.\\
Now, from $a^mRb=\{0\}$ and $a^m\neq 0$, we deduce that $(abRa^{m-1})^2=\{0\}$. So, $abRa^{m-1}$ is a reversible element of $R$. It thus follows that $aRabRa^{m-1}=\{0\}$. Again continuing this process finitely many times, we receive $(aR)^{m+1}b=0$ such that $a^{m+1}\neq 0$, as suspected.
\end{proof}

\begin{lemma}\label{2.5} Let $R$ be a weakly reversible ring and $a,b\in R\backslash\{0\}$ with $ab=0$. Then, there exists an integer $k$ such that $a(Rb)^k =\{0\}$ and $(bR)^ka=\{0\}$ with $b^k\neq0$.
\end{lemma}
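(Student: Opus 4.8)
The plan is to recognize that this statement is precisely the mirror image of Lemma~\ref{4new}, obtained by simultaneously interchanging the roles of the two elements $a,b$ and reversing the multiplication. The cleanest route, therefore, is to transfer Lemma~\ref{4new} across the opposite ring $R^{\mathrm{op}}$ rather than to re-run any computation.

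First I would verify that $R^{\mathrm{op}}$ is again weakly reversible. This is immediate from the definition: an element $u$ is reversible in $R$ exactly when $r_R(u)=l_R(u)$, and passing to $R^{\mathrm{op}}$ merely swaps left and right annihilators, so the equality $r_R(u^m)=l_R(u^m)$ is preserved. Since the powers of a single element coincide in $R$ and in $R^{\mathrm{op}}$, the weak reversibility of $R$ carries over verbatim to $R^{\mathrm{op}}$.

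Next I would observe that the hypothesis $ab=0$ in $R$ reads $b\ast a=0$ in $R^{\mathrm{op}}$, where $\ast$ denotes the opposite product. Applying Lemma~\ref{4new} inside $R^{\mathrm{op}}$ to the ordered pair $(b,a)$ then produces an integer $k$ with $b^{k}\neq 0$ such that $(bR^{\mathrm{op}})^{k}\ast a=\{0\}$ and $a\ast(R^{\mathrm{op}}b)^{k}=\{0\}$, which are the two desired vanishing conditions, but still expressed in $R^{\mathrm{op}}$.

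The one genuinely delicate step is translating these set-theoretic identities back into $R$. Here I would use the dictionary $bR^{\mathrm{op}}=Rb$ and $R^{\mathrm{op}}b=bR$ at the level of sets, together with the fact that a $k$-fold $\ast$-product reverses the order of its factors; a short induction then gives $(bR^{\mathrm{op}})^{k}=(Rb)^{k}$ and $(R^{\mathrm{op}}b)^{k}=(bR)^{k}$ as subsets of $R$. Using finally that $S\ast a=aS$ and $a\ast S=Sa$ for any subset $S$, the two conclusions become $a(Rb)^{k}=\{0\}$ and $(bR)^{k}a=\{0\}$ with $b^{k}\neq 0$, which is exactly the assertion. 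I expect the careful bookkeeping of how the powers and the flanking factor transform under the opposite correspondence to be the main obstacle, since everything else is a direct invocation of Lemma~\ref{4new}. Alternatively, one could repeat the inductive ``peeling'' argument of Lemma~\ref{4new} with the roles of $a$ and $b$, and of left and right, interchanged, but the passage through $R^{\mathrm{op}}$ avoids reproducing that computation altogether.
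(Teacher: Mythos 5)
Your proposal is correct, but it takes a different route from the paper. The paper proves Lemma~\ref{2.5} by simply re-running the inductive ``peeling'' argument of Lemma~\ref{4new} with the roles of $a$ and $b$ (and of left and right) interchanged, using Lemma~\ref{5} in place of Lemma~\ref{4} to seed the induction; it does not mention the opposite ring at all. Your transfer through $R^{\mathrm{op}}$ is legitimate and arguably cleaner: the key observation that element-wise reversibility is the left--right symmetric condition $r_R(u)=l_R(u)$, so that weak reversibility is inherited by $R^{\mathrm{op}}$, is exactly right, and your bookkeeping of the dictionary ($bR^{\mathrm{op}}=Rb$, $R^{\mathrm{op}}b=bR$, $k$-fold $\ast$-products of a fixed subset giving the same underlying set because the factors range independently, and $S\ast a=aS$, $a\ast S=Sa$) checks out, landing precisely on $a(Rb)^k=\{0\}$, $(bR)^k a=\{0\}$, $b^k\neq 0$. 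What your approach buys is economy and a guarantee that the two lemmas cannot drift apart: any future correction to Lemma~\ref{4new} automatically propagates. What the paper's approach buys is self-containedness of the computation and no reliance on the (easy but worth stating) fact that the class of weakly reversible rings is closed under passing to the opposite ring. Either argument is acceptable; if you use the $R^{\mathrm{op}}$ route you should state the closure under opposites explicitly as a one-line remark, since the paper never records it.
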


\begin{proof} Suppose that $a,b\in R$ are non-zero elements of $R$ such that $ab=0$. We may additionally assume also that $aRb\neq \{0\}$. If $b\in Nil(R)$, then Lemma \ref{nil} is applicable to conclude that there exists an integer $n$ such that $(RbR)^n=\{0\}\neq (RbR)^{n-1}$. Then, $(RbR)^{n-1}$ is a reversible subset of $R$. So, $$a(RbR)^{n-1}=\{0\}=(RbR)^{n-1}a,$$ and the arguments are completed in this case.\\
If, however, $b\not\in Nil(R)$, then, in virtue of Lemma \ref{5}, there is a positive integer $m\geq1$ minimal with respect to the restriction that $aRb^m=\{0\}$ and $b^mRa=\{0\}$ with $b^m\neq 0$. It immediately follows now that $(b^{m-1}RaRb)^2=\{0\}$. But, since $R$ is a weakly reversible ring and $b(b^{m-1}RaRb)=0$, we then can write $b^{m-1}RaRbRb=\{0\}$.

In a way of similarity, from $(b^{m-2}RaRbRb)^2=0$ and $b((b^{m-2}RaRbRb)=0$, we write that $b^{m-2}RaRbRbRb=\{0\}$. Continuing this process a finite many times, we obtain $a(Rb)^{m+1}=\{0\}$ such that $b^{m+1}\neq 0$.\\
Now, from $aRb^m=\{0\}$ and $b^m\neq 0$, we detect that $(baRb^{m-1})^2=\{0\}$. So, $baRb^{m-1}$ is a reversible element of $R$. It thus follows that $b^2aRb^{m-1}=\{0\}$. Again continuing this process a finite many times, we receive $(Rb)^{m+1}a=0$ such that $b^{m+1}\neq 0$, as expected.
\end{proof}

The next assertion is somewhat unexpected.

\begin{lemma}\label{nilid} Let $R$ be a weakly reversible ring. Then, $Nil(R)$ is an ideal of $R$.
\end{lemma}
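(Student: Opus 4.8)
The plan is to prove the sharper statement that $Nil(R)$ coincides with the lower nil-radical $Nil_{*}(R)$, which is automatically a two-sided ideal, being an intersection of prime ideals. Since the inclusion $Nil_{*}(R)\subseteq Nil(R)$ always holds and is recorded at the very outset, it suffices to establish the reverse containment $Nil(R)\subseteq Nil_{*}(R)$. This single inclusion delivers all three ideal axioms at once — closure under addition and under left and right multiplication — which is the convenient route, because additive closure is precisely the step that resists a direct, element-wise attack on sums of nilpotents.

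The engine of the argument is Lemma~\ref{nil}. Fix $0\neq a\in Nil(R)$ and choose $m>0$ with $a^m=0$; the lemma upgrades this to $(aR)^m=\{0\}$. One immediately sees that $ar$ and $ra$ stay nilpotent for every $r\in R$, since taking all factors equal to $r$ in $(aR)^m=\{0\}$ gives $(ar)^m=0$, and then $(ra)^{m+1}=r(ar)^m a=0$; but these partial observations will be subsumed by the real point. The decisive upgrade is to pass from the right ideal $aR$ to the two-sided ideal $RaR$: using $R^2=R$ one has the identity $(RaR)^m=R(aR)^m=\{0\}$, so $RaR$ is a \emph{nilpotent} two-sided ideal of $R$.

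Once this is in hand the conclusion is immediate. A nilpotent two-sided ideal lies inside the Wedderburn radical, so $RaR\subseteq W(R)\subseteq Nil_{*}(R)$, and since $R$ is unital we have $a=1\cdot a\cdot 1\in RaR$, whence $a\in Nil_{*}(R)$. As $a$ was an arbitrary nilpotent element, this yields $Nil(R)\subseteq Nil_{*}(R)$, and therefore $Nil(R)=Nil_{*}(R)$ is an ideal. (As a byproduct this also reproves that $R$ is $2$-primal, which is asserted in the introduction.)

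I expect the only genuinely substantive point to be the passage $(aR)^m=\{0\}\Rightarrow (RaR)^m=\{0\}$, that is, recognizing that Lemma~\ref{nil} secretly asserts that the whole ideal generated by a nilpotent element is nilpotent; the verification itself is just the one-line identity $(RaR)^m=R(aR)^m$, resting only on $RR=R$. Everything downstream — in particular the troublesome additive closure — then follows formally from the fact that $Nil_{*}(R)$ is already known to be an ideal, so no direct manipulation of sums of nilpotent elements is ever required.
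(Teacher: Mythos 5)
Your proof is correct, and it takes a genuinely different route from the paper's. The paper also leans on Lemma~\ref{nil}, but only to get multiplicative closure ($RNil(R)R=Nil(R)$); it then proves additive closure head-on by expanding $(a+b)^{m+n+1}$ into interleaved monomials $a^{i_1}b^{j_1}\cdots a^{i_t}b^{j_t}$ and arguing that each monomial dies because it contains either at least $m$ factors of $a$ or at least $n$ factors of $b$ separated by ring elements, which $(aR)^m=0$ (resp.\ $(bR)^n=0$) annihilates. You instead observe that Lemma~\ref{nil} really says the two-sided ideal generated by a nilpotent element is nilpotent, via $(RaR)^m=R(aR)^m=\{0\}$ (which is a correct one-line computation using $1\in R$), so $a\in RaR\subseteq W(R)\subseteq Nil_{*}(R)$, giving $Nil(R)\subseteq Nil_{*}(R)$ and hence equality; all ideal axioms, including the troublesome additive closure, are then inherited from $Nil_{*}(R)$. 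Your version buys a cleaner argument with no combinatorial bookkeeping over monomials (the paper's expansion is written somewhat loosely, with index typos), and it yields the $2$-primality of $R$ explicitly as a byproduct --- a fact the paper asserts in the introduction and actually invokes inside the proof of Theorem~\ref{maj} without ever isolating it. The paper's version is more elementary in the sense that it never appeals to the lower nilradical or to the containment of nilpotent ideals in prime ideals, staying entirely at the level of element computations.
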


\begin{proof} Applying Lemma \ref{nil}, we arrive at the equality $RNil(R)R=Nil(R)$. So, it remains to establish only that $Nil(R)$ is a subgroup of $R$. To this target, assuming that $a,b\in nil(R)$, then there exist two integers $m,n$ such that $a^m=b^n=0$. Therefore,  $$(a+b)^{m+n+1}=\sum_{i_1+\cdots+i_t+j_1+\cdots+j_t=m+n+1}a^{i_1}b^{j_1}a^{i_2}b^{j_2}\cdots a_{i_t}b^{j_t},$$ where $0\leq i_1,j,\ldots,i_t,j_t\leq m+n+1$.
Now, consider the element $a^{i_1}b^{j_1}a^{i_2}b^{j_2}\cdots a_{i_t}b^{j_t}$. If $i_1+i_2\cdots+i_t< m$, then $j_1+j_2+\cdots+j_t> m+1$ and so one sees that $$b_{j_1+j_2+\cdots+j_t}=b^{j_1}b^{j_2}\cdots b_{j_t}=0.$$ So, invoking  Lemma \ref{nil}, we get $$a^{i_1}b^{j_1}a^{i_2}b^{j_2}\cdots a_{i_t}b^{j_t}=0.$$ If $i_1+i_2\cdots+i_t\geq m$, then it must be that $$a^{i_1}a^{i_2}\cdots a^{i_t}=a^{i_1+i_2\cdots+i_t}=0,$$ which yields by analogy that $$a^{i_1}b^{j_1}a^{i_2}b^{j_2}\cdots a_{i_t}b^{j_t}=0.$$ Hence, $(a+b)^{m+n+1}=0$, and we are done.
\end{proof}

The next comments are worthwhile.

\begin{remark} If $R$ is a weakly reversible ring, then Lemma~\ref{nilid} is a guarantor that $Nil(R)$ is a two-sided ideal of $R$, and so the factor-ring $R/Nil(R)$ is a reduced ring. However, the reciprocal assertion is not true in general although it is pretty clear that any reduced ring is reversible and so obviously weakly reversible: for example, let $F$ be a field and $S=T_n(F)$ be the upper triangular matrix ring -- we know that $Nil(S)$ is an ideal of $S$ as well as that $S/Nil(S)$ is isomorphic to the direct product $F^n$ of exactly $n$ copies of $F$ -- so, the quotient is reduced and hence a weakly reversible ring, but $S$ is manifestly {\it not} weakly reversible ring, as asserted.
\end{remark}

We are now managed to show validity of certain critical properties concerning the weak reversibility of the polynomials.

\begin{lemma} Let $R$ be a weakly reversible ring. If $$f(x)=\sum ^{m} _{i=0} a_{i} x^{i},g(x) = \sum ^{n}_{j =0} b_{j} x ^{j}\in R[x]$$ with $f(x)g(x)=0$, then there exists a positive integer $k$ with $(a_0R)^{k}g(x)=0$.
\end{lemma}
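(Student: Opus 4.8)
The plan is to read off the coefficient equations $\sum_{\ell=0}^{j} a_\ell b_{j-\ell}=0$ (one for each $j$) from $f(x)g(x)=0$ and to prove, by induction on $j$, that there is a single integer $K$ with $(a_0R)^{K}b_j=\{0\}$ for every $j$. Since $(a_0R)^{K}g(x)=0$ is equivalent to $(a_0R)^{K}b_j=\{0\}$ for all $j=0,\dots,n$, this is exactly the desired conclusion. Before starting, I would dispose of the case $a_0\in Nil(R)$ at once: if $a_0^{N}=0$, then Lemma~\ref{nil} gives $(a_0R)^{N}=\{0\}$, so $(a_0R)^{N}g(x)=0$ trivially. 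Hence from now on I may assume $a_0\notin Nil(R)$, i.e. $a_0^{n}\neq0$ for all $n$.

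For the base case $j=0$ the constant term gives $a_0b_0=0$, and Lemma~\ref{4new} supplies $k_0$ with $(a_0R)^{k_0}b_0=\{0\}$ (take $K=1$ if $b_0=0$). For the inductive step, suppose $(a_0R)^{K}b_i=\{0\}$ for all $i<j$. Left-multiplying the degree-$j$ equation $a_0b_j+\sum_{\ell=1}^{j}a_\ell b_{j-\ell}=0$ by an arbitrary $w\in(a_0R)^{K}$, I observe that $wa_\ell\in(a_0R)^{K}$ for every $\ell\ge1$ (absorb $a_\ell$ into the last factor), while $j-\ell<j$, so every term with $\ell\ge1$ dies by the inductive hypothesis. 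This isolates $wa_0b_j=0$, i.e. $(a_0R)^{K}a_0b_j=\{0\}$; specialising $w=a_0^{K}$ yields in particular $a_0^{K+1}b_j=0$.

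The crux is to upgrade the ``glued'' relation $a_0^{K+1}b_j=0$ to a genuinely factored annihilation $(a_0R)^{k_j}b_j=\{0\}$, and here I would use the weak-reversibility machinery in two stages. First, since $a_0^{K+1}\neq0$, the unlabelled lemma immediately preceding Lemma~\ref{5} (applied with $a:=a_0^{K+1}$ and $b:=b_j$) reinserts a factor of $R$: there is $M$ with $a_0^{M}Rb_j=\{0\}=b_jRa_0^{M}$ and $a_0^{M}\neq0$. Choosing $m\le M$ minimal subject to $a_0^{m}Rb_j=\{0\}=b_jRa_0^{m}$ with $a_0^{m}\neq0$, the sliding argument from the proof of Lemma~\ref{4new}---form $a_0^{m-1}Rb_jRa_0$, check that its square vanishes because it contains the block $a_0^{m}Rb_j$, invoke Lemma~\ref{4} to make it a reversible subset, and slide the leading power of $a_0$ to the tail---produces $(a_0R)^{m+1}b_j=\{0\}$. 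Replacing $K$ by $\max(K,m+1)$ preserves the hypothesis for all $i\le j$, since higher powers of the right ideal $a_0R$ sit inside lower ones, and the induction proceeds.

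The main obstacle is precisely this upgrade step: isolating the degree-$j$ coefficient only delivers $(a_0R)^{K}a_0b_j=0$, in which $a_0$ is fused to $b_j$, and no purely formal manipulation turns this into $(a_0R)^{k}b_j=0$. What rescues it is that weak reversibility first restores an interior $R$ (via the lemma before Lemma~\ref{5}) and then lets one trade the block $a_0^{m}R$ for the iterated product $(a_0R)^{m}$ through the square-zero/Lemma~\ref{4} sliding. Finally, because $g$ has only finitely many coefficients, taking $k$ to be the largest bound produced along the induction yields a single $k$ with $(a_0R)^{k}g(x)=0$.
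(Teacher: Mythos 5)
Your proposal is correct and its overall architecture coincides with the paper's: dispose of $a_0\in Nil(R)$ via Lemma~\ref{nil}; then induct on the coefficient index, using the absorption $(a_0R)^{K}a_{\ell}\subseteq(a_0R)^{K}$ to kill the lower terms of the $j$-th coefficient equation and isolate $(a_0R)^{K}a_0b_{j}=0$; and finally upgrade this ``glued'' relation to $(a_0R)^{k_j}b_{j}=0$ by exploiting the reversibility of square-zero elements. The only genuine divergence is in how the upgrade is executed. The paper never leaves the set $(a_0R)^{K}a_0$: it notes that $b_{j}(a_0R)^{K}a_0$ is square-zero, hence a reversible subset, deduces $b_{j}(a_0R)^{2K+2}=0$, and then repeats the trick with $(a_0R)^{2K+2}b_{j}$ to reach $(a_0R)^{4K+4}b_{j}=0$ --- a self-contained two-step doubling with explicit control of the exponent. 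You instead collapse to the pure power $a_0^{K+1}b_j=0$, re-inflate it to $a_0^{M}Rb_j=b_jRa_0^{M}=0$ via the unlabelled lemma preceding Lemma~\ref{5}, and then replay the minimal-$m$ sliding from the proof of Lemma~\ref{4new}. Both routes work and rely on the same square-zero/Lemma~\ref{4} mechanism; yours is more modular in that it reuses the earlier lemmas wholesale, while the paper's is shorter and avoids re-running the sliding. One small imprecision: the particular slide you describe (forming $a_0^{m-1}Rb_jRa_0$ and pushing the leading power of $a_0$ to the tail) is the half of Lemma~\ref{4new}'s argument that delivers $b_j(Ra_0)^{m+1}=0$; the left-sided conclusion $(a_0R)^{m+1}b_j=0$, which is what your induction hypothesis and the final statement actually consume, comes from the other half of that proof (the one built on $a_0b_jRa_0^{m-1}$). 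Since Lemma~\ref{4new} establishes both sides, this is a mislabeling rather than a gap.
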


\begin{proof} If $a_0\in Nil(R)$, then there exists an integer $k$ such that $(a_0R)^k=0$. It follows at once that $(a_0R)^kg(x)=0$.\\
If $a_0\not\in Nil(R)$, then Lemma \ref{4} employs to get that there exists an integer $k$ such that $(a_0R)^kb_0=b_0(a_0R)^k=0$.\\ Assume now by induction that $(a_0R)^kb_j=b_j(a_0R)^k=0$ for $0\leq j< n$. From this and $$a_0b_{j+1}+a_1b_j+\cdots+a_jb_1+a_{j+1}b_0=0,$$ we obtain that $$(a_0R)^ka_0b_{j+1}+(a_0R)^ka_1b_j+\cdots+(a_0R)^ka_jb_1+(a_0R)^ka_{j+1}b_0=0.$$ Since $(a_0R)^ka_i\subseteq (a_0R)^k$ for $1\leq i\leq j+1$, we have $(a_0R)^ka_i b_t=0$, $1\leq i\leq j+1$ and $0\leq t\leq j$. Thus, $(a_0R)^ka_0b_{j+1}=0$. It follows that $(b_{j+1}(a_0R)^ka_0)^=0$, and so $b_{j+1}(a_0R)^ka_0$ is a reversible subset of $R$. But, as $R(a_0R)^ka_0b_{j+1}=0$, we write $$R(a_0R)^ka_0\in l_R(b_{j+1}(a_0R)^ka_0)=r_R(b_{j+1}(a_0R)^ka_0).$$ So, $b_{j+1}(a_0R)^ka_0R(a_0R)^ka_0=0$, which enables us that $b_{j+1}(a_0R)^{2k+2}=0$. Therefore, $((a_0R)^{2k+2}b_{j+1})^2=0$, and so $(a_0R)^{2k+2}b_{j+1}$ is a reversible subset of $R$. Since $b_{j+1}(a_0R)^{2k+2}=0$, we get $(a_0R)^{2k+2}b_{j+1}(a_0R)^{2k+2}=0$. It follows that $(a_0R)^{4k+4}b_{j+1}=0$. As $b_{j+1}(a_0R)^{2k+2}=0$, we obtain $b_{j+1}(a_0R)^{4k+4}=0$.\\ Consequently, there exists an integer $t$ such that $(a_0R)^{t}g(x)=0$ and $g(x)(a_0R)^{t}=0$, as desired.
\end{proof}

\begin{lemma}\label{ni} Let $R$ be a weakly reversible ring and $f(x),g(x)\in R[x]$. If $f(x)g(x)=0$, then $a_ib_j\in Nil(R)$, where $f(x)=a_0+a_{1}x+\cdots a_mx^m\in R[x]$ and $g(x)=b_0+b_1x+\cdots +b_nx^{n}\in R[x]$.
\end{lemma}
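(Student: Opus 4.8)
The plan is to prove that every coefficient product $a_ib_j$ lies in $Nil(R)$ by inducting on $i+j$, exploiting the weak reversibility established in the previous lemmas. First I would recall that a weakly reversible ring is $2$-primal (Lemma~\ref{nilid} shows $Nil(R)$ is a two-sided ideal, and the Remark notes $R/Nil(R)$ is reduced). The key payoff of this is that membership in $Nil(R)$ is easy to detect after passing to the reduced quotient $\bar R := R/Nil(R)$: an element $r$ lies in $Nil(R)$ if and only if $\bar r = 0$, and in a reduced ring $\bar r \bar s = 0$ forces $\bar s \bar r = 0$ and, more usefully, $\bar r \bar R \bar s = 0$. So I would reduce the whole problem to showing that the images $\bar a_i \bar b_j$ vanish in $\bar R[x]$, where $\bar f(x)\bar g(x) = 0$.

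The induction would run on $i+j$. The base case is the constant-term relation $a_0b_0 = 0$, which already gives $a_0b_0 \in Nil(R)$ trivially. For the inductive step, I would look at the coefficient of $x^{i+j}$ in $f(x)g(x) = 0$, namely
\[
\sum_{p+q = i+j} a_p b_q = 0,
\]
and multiply this relation on the left by $(a_i R)^{k}$ for a suitably large $k$ coming from the preceding annihilator lemmas (Lemmas~\ref{4new} and~\ref{2.5}). The induction hypothesis says that for every pair $(p,q)$ with $p+q < i+j$ the product $a_p b_q$ is nilpotent; the goal is to peel off all the off-diagonal terms so that only a single term involving $a_i b_j$ survives, and then argue that this surviving term must itself be nilpotent.

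The main obstacle is the standard one for McCoy-type arguments: in a noncommutative ring the cross terms $a_p b_q$ with $p+q = i+j$ but $(p,q)\neq (i,j)$ do not automatically vanish, and one cannot simply cancel them. This is exactly where weak reversibility does the heavy lifting. The strategy is to sandwich the $x^{i+j}$-coefficient relation between powers of $a_i$ (or $b_j$) and use Lemma~\ref{4new}/Lemma~\ref{2.5} — which convert a one-sided relation $ab=0$ into two-sided statements $(aR)^k b = b(Ra)^k = \{0\}$ — to annihilate the unwanted terms one index at a time, just as in the preceding two lemmas. I expect the cleanest route is to argue entirely in $\bar R$: there, reducedness upgrades each previously-obtained relation $\bar a_p\bar b_q = 0$ to $\bar a_p \bar R\, \bar b_q = \{0\}$, and these two-sided vanishing statements let me successively eliminate the cross terms from $\sum_{p+q=i+j}\bar a_p\bar b_q = 0$ by multiplying on the left by $\bar a_i$ and on the right by $\bar b_j$ and commuting factors across the zero-products. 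What remains is a relation forcing $\bar a_i\bar b_j = 0$, i.e.\ $a_ib_j \in Nil(R)$, completing the induction.
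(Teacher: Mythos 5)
Your proposal is essentially the paper's own argument transported into the reduced quotient $\bar R=R/Nil(R)$: both induct on $i+j$ starting from $a_0b_0=0$, both rest on Lemma~\ref{nilid} (so that $R$ is $2$-primal and nilpotence can be tested modulo the ideal $Nil(R)$), and both eliminate the cross terms of the degree-$(i+j)$ coefficient identity by upgrading the inductively known relations to two-sided annihilation; the detour through $(a_iR)^k$ and Lemmas~\ref{4new}/\ref{2.5} is unnecessary once you are in $\bar R$. The one detail to repair when writing it out is the choice of multipliers: left-multiplying by $\bar a_i$ and right-multiplying by $\bar b_j$ does not isolate $\bar a_i\bar b_j$, whereas the working choice (the paper's) is to left-multiply $\sum_{p+q=k}\bar a_p\bar b_q=0$ by $\bar b_0$, then by $\bar b_1,\dots,\bar b_{k-1}$ in turn, each time reducing to $\bar b_j\bar a_{k-j}\bar b_j=0$ and hence to $(\bar a_{k-j}\bar b_j)^2=0$, which kills $\bar a_{k-j}\bar b_j$ by reducedness.
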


\begin{proof} We show by induction on $i+j$ that $a_{i}b_{j} \in Nil(R)$, $0\leq i\leq m$ and $0\leq j\leq n$. If $i+j=0$, then $a_{0} b_{0} = 0\in  Nil(R)$ and thus $b_{0}a_{0} \in Nil(R)$. Now, we suppose that our claim is true for $i+j<k$. So, $a_{i} b_{j}\in Nil(R)$ when $i+j<k$. We, therefore, have
	    \begin{equation}\label{eq1}
	    a_{0}b_{k} + a_{1} b_{k-1}  + \cdots + a_{k} b_{0} =0.
	    \end{equation}
But, multiplying from the left side by $b_{0}$, we obtain
	    $$b_{0}a_{0}b_{k} + b_{0}a_{1}\ b_{k-1} + \cdots + b_{0}a_{k} b_{0}=0.$$
Under the presence of the induction hypothesis, it must be that $a_{i} b_{0}\in Nil(R)$ for each $0\leq i< k$. However, since $a_{i}b_{0}\in Nil(R)$, we get $b_{0}a_{i}\in Nil(R)$ for each $0\leq i< k$. Thus, $b_{0}a_{k}b_{0} \in Nil(R)$, because $Nil(R)$ is an ideal of $R$ in conjunction with Lemma~\ref{nilid}. That is why, $b_{0}a_{k}b_{0} \in Nil(R)$ and, therefore, $a_{k}b_{0} \in Nil(R)$.

Furthermore, multiplying equation $(2.1)$ subsequently by $b_{1}, b_{2}\cdots , b_{k-1}$ from left side, we yield that $a_ib_j\in Nil(R)$ for any $0\leq i\leq  k$. Consequently, $a_ib_j\in Nil(R)$, $0\leq i\leq  m$ and $0\leq j\leq  n$, as wanted.
\end{proof}

We now attack the truthfulness of our major result.

\begin{theorem}\label{maj} Let $R$ be a weakly reversible ring. Then, $R[x]$ is simultaneously strongly right AB and strongly left AB (or, in other words, it is a strongly AB ring).
\end{theorem}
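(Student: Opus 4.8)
The plan is to show that every non-zero right annihilator in $R[x]$ is bounded; the left case then follows by the symmetric versions of the preparatory lemmas. Recall that a right annihilator is bounded if it contains a non-zero two-sided ideal. So I would start by fixing a non-zero polynomial $g(x) = b_0 + b_1 x + \cdots + b_n x^n \in R[x]$ with a non-zero right annihilator, pick some non-zero $f(x) = a_0 + a_1 x + \cdots + a_m x^m \in r_{R[x]}(g(x))$, so that $f(x)g(x) = 0$, and aim to locate a non-zero two-sided ideal of $R[x]$ sitting inside $r_{R[x]}(g(x))$.

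The key structural input is Lemma~\ref{ni}, which tells us that $a_i b_j \in Nil(R)$ for all $i,j$ whenever $f(x)g(x)=0$. Since $R$ is weakly reversible, $Nil(R)$ is an ideal (Lemma~\ref{nilid}) and $R$ is $2$-primal, so $Nil(R) = Nil_*(R)$ and consequently $Nil(R[x]) = Nil(R)[x]$. The strategy is then to produce, from the coefficient data of $f$, a non-zero element $a$ of $R$ whose two-sided ideal behaves well against all the $b_j$. Concretely, I would select a non-zero coefficient $a_i$ of $f$ and apply the boundedness-type lemmas (Lemmas~\ref{4new} and~\ref{2.5}, together with the nilpotent-ideal Lemma~\ref{nil}) to convert the relations $a_i b_j \in Nil(R)$ into genuine annihilation after passing to an appropriate power: the point of weak reversibility is precisely that some power $a_i^k$ is a reversible element, which upgrades one-sided annihilation to two-sided annihilation and lets us form the honest two-sided ideal $R a_i^k R$. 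The candidate bounding ideal inside $R[x]$ should be of the form $\big(R a^k R\big)[x]$ for a suitable non-zero $a^k \in R$ extracted this way, and I would verify that left-multiplying $g(x)$ by every element of this ideal yields zero by checking it coefficient-by-coefficient, using the reversibility of $a^k$ to commute annihilators past the $b_j$ exactly as in the proof of the previous polynomial lemma.

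The main obstacle I anticipate is organizing the passage from ``$a_i b_j$ is nilpotent for every pair'' to a single uniform exponent $k$ and a single non-zero element $a^k$ that works against all coefficients $b_0, \dots, b_n$ of $g$ simultaneously. Individually each pair gives its own power via Lemmas~\ref{4new} and~\ref{2.5}, but I must take a common power large enough to kill every pair at once while keeping $a^k \neq 0$; guaranteeing that the resulting ideal $R a^k R$ is non-zero is the delicate point, and this is where I would lean on weak reversibility to ensure that the relevant power of the chosen coefficient does not collapse to zero. Once the non-zero two-sided ideal $\big(R a^k R\big)[x] \subseteq r_{R[x]}(g(x))$ is in hand, boundedness of the right annihilator is immediate, proving $R[x]$ is strongly right AB; running the mirror-image argument with the left-handed lemmas gives strongly left AB, and hence $R[x]$ is strongly AB, as desired.
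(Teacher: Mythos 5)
Your proposal has two substantive gaps. First, it only treats the right annihilator of a single polynomial $g(x)$, whereas the strongly right AB property quantifies over \emph{all} non-zero right annihilators $r_{R[x]}(X)$ for arbitrary subsets $X\subseteq R[x]$; knowing that each $r_{R[x]}(g)$, $g\in X$, contains a non-zero ideal says nothing about their intersection. The paper works with an arbitrary subset $X$ throughout and begins with an easy but essential reduction that your plan omits: it suffices to show $r_R(X)\neq 0$, because if $Xr=0$ with $r\neq 0$ then some power $r^k\neq 0$ is reversible, whence $XR[x]r^kR[x]=0$ and $R[x]r^kR[x]$ is the bounding ideal. (Your writeup also repeatedly confuses sides: an element of $r_{R[x]}(g)$ satisfies $g(x)f(x)=0$, and verifying that the candidate ideal left-annihilates $g$ would place it in $l_{R[x]}(g)$, not $r_{R[x]}(g)$.)

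Second, and more seriously, the engine of your argument is missing. You propose to convert the relations $a_ib_j\in Nil(R)$ from Lemma~\ref{ni} into genuine annihilation $a_i^kb_j=0$ with $a_i^k\neq 0$ via Lemmas~\ref{4new} and~\ref{2.5}, but those lemmas take the exact equation $ab=0$ as hypothesis, not mere nilpotence of $ab$, and nilpotence of $a_ib_j$ does not in general produce a non-zero power of $a_i$ killing $b_j$. You correctly identify the ``single uniform exponent'' step as the delicate point, but you offer no mechanism for it, and there is none along these lines. The paper's actual mechanism is different: it chooses $g(x)$ of \emph{minimal degree} in $r_{R[x]}(X)$ and, for each $f\in X$, runs a McCoy-style descending induction on the coefficients of $f$, using reversibility of the set $\{b_jc\}$ (where $c$ is a coefficient of an element of $X$ with $g(x)c\neq 0$) together with the minimality of $\deg g$ to pass from $Xg(x)ca_m=0$ to $g(x)ca_m=0$. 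The element that lands in $r_R(X)$ is a non-zero coefficient of $g(x)c$ --- a product $b_tc$ --- not a power of a coefficient of $f$. Without the minimal-degree choice, your coefficient-by-coefficient verification has nothing to drive it.
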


\begin{proof} We prove only the right case, as the left one is quite similar. To this goal, suppose $X\subseteq R[x]$ and $r_{R[x]}(X)\neq 0$. Note that the proof is over if we succeed to show $r_R(X)\neq 0$, because if $Xr=0$ for some non-zero $r\in R$, then there exists an integer $k$ such that $r^k$ is a non-zero reversible element of $R$. This allows us that $$r^kX=0 \Longrightarrow r^kXR=0\Longrightarrow XRr^k=0.$$ Therefore, $XR[x]r^k=0$. Hence, we can assume that $Xg(x)=0$ for some $g(x)=b_0+b_1x+\cdots +b_nx^{n}\in S$ having minimal degree such that $b_{n}\neq 0$.

\medskip

We now distinguish two basic cases:

\medskip

\noindent\textbf{Case 1:} If $g(x)c_X=0$, then $c_Xc_{g(x)}$ is a reversible subset of $R$ and so $c_Xc_Xg(x)=0$. If $c_Xg(x)=0$, then $Xb_j=0$, $0\leq j\leq n$. If, however, $c_Xg(x)\neq 0$, then there exists an integer $0\leq t \leq n$ such that $c_Xb_t\neq 0$. Since $c_Xc_Xg(x)=0$, we have $Xc_Xb_t=0$, and the proof ends in this case.

\medskip

\noindent\textbf{Case 2:} If $g(x)c_X\neq 0$, then there exists $c\in c_X$ such that $g(x)c\neq 0$. Utilizing the preceding Lemma \ref{ni}, we have that $g(x)c\in Nil(R[x])=Nil(R)[x]$. But, as $R$ is $2$-primal, we may assume that $b_jcb_ic=0$ for $0\leq i,j\leq n$. So, the set $\{b_jc ~|~ 0\leq j\leq n\}$ is a reversible subset of $R$. Considering now the arbitrary element $f(x)=a_0+a_{1}x+\cdots a_mx^m\in X$, we derive from $f(x)g(x)=0$ that $a_mb_nc=0$, and since $b_nc$ is a reversible element of $R$, we deduce $b_nca_m=0$. Then, $Xg(x)ca_m=0$ and since $deg(g(x)ca_m)< deg(g(x)$, we obtain that $g(x)ca_m=0$. Hence, $a_mg(x)c=0$, because $\{b_jc\}$, $0\leq j\leq n$ is a reversible subset of $R$. From this and $f(x)g(x)c=0$, it follows that $a_{m-1}b_nc=0$. Similarly, we receive $b_nca_{m-1}=0$ and since $g(x)$ has the minimal degree in $r_{R[x]}(X)$, it must be that $g(x)ca_{m-1}=0$ and so $a_{m-1}g(x)c=0$. Continuing this process finitely many times, we get $a_ig(x)c=0$ for $0\leq i\leq m$. As $f(x)$ is arbitrary element of $X$, we finally conclude that $c_Xg(x)c=0$. This, in turn, means that $r_R(X)\neq 0$, as pursued.
\end{proof}

Two immediate valuable consequences are these:

\begin{corollary}\label{mccoy} Let $R$ be a weakly reversible ring. Then, $R$ is a McCoy ring.
\end{corollary}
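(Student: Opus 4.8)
The plan is to deduce the McCoy condition directly from Theorem~\ref{maj} by specializing it to a singleton. Recall that $R$ is \emph{right McCoy} exactly when, for non-zero $f(x),g(x)\in R[x]$ with $f(x)g(x)=0$, there is a non-zero constant $c\in R$ with $f(x)c=0$; the left version is dual. So I would begin from such a pair $f(x),g(x)$ and set $X=\{f(x)\}\subseteq R[x]$, reducing the whole statement to a single application of the theorem plus its mirror image.

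The key steps, in order, are as follows. First, observe that $0\neq g(x)\in r_{R[x]}(X)$, so the right annihilator $r_{R[x]}(X)$ is non-zero. Second, invoke the right case of Theorem~\ref{maj}: since $R[x]$ is strongly right AB, this non-zero annihilator is bounded. Third, and this is the crucial point, I would read off from the argument proving Theorem~\ref{maj} that the boundedness is obtained precisely by first producing a non-zero \emph{constant} annihilator, i.e. by establishing $r_R(X)\neq 0$. Indeed, the opening reduction of that proof shows that once some $0\neq r\in R$ kills $X$ one has $XR[x]r^k=0$ for a suitable $k$ with $r^k\neq 0$, whence the two-sided ideal $R[x]r^kR[x]$ sits inside $r_{R[x]}(X)$; the genuine content of the proof is therefore the reverse passage $r_{R[x]}(X)\neq 0\Rightarrow r_R(X)\neq 0$, which its body carries out in full. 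Applied to $X=\{f(x)\}$ this yields a non-zero $c\in R$ with $f(x)c=0$, which is exactly right McCoy, and the left case of Theorem~\ref{maj} gives left McCoy by the symmetric argument, so $R$ is McCoy.

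The main obstacle is conceptual rather than computational. The \emph{statement} of Theorem~\ref{maj} only asserts boundedness of annihilators inside $R[x]$, whereas McCoy demands a non-zero annihilating element of $R$ itself; since a non-zero two-sided ideal of $R[x]$ need not contain any non-zero constant, one cannot simply quote ``bounded'' as a black box. The gap is bridged by using the intermediate conclusion $r_R(X)\neq 0$ that the proof of Theorem~\ref{maj} actually delivers en route. Once this identification is made the corollary is immediate, and none of the annihilator estimates of Lemmas~\ref{nil}--\ref{2.5} are needed again here.
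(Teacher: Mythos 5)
Your proof is correct and follows the paper's intended route: Corollary~\ref{mccoy} is stated there as an immediate consequence of Theorem~\ref{maj}, and you correctly extract from the body of that theorem's proof the stronger intermediate fact $r_R(X)\neq 0$ whenever $r_{R[x]}(X)\neq 0$, which applied to $X=\{f(x)\}$ (and its left-sided mirror) is precisely the McCoy condition. Your observation that the bare \emph{statement} of the theorem would not suffice --- since a non-zero two-sided ideal of $R[x]$ need not contain a non-zero constant --- is a genuine subtlety that the paper glosses over, and you bridge it in exactly the way the authors implicitly intend.
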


\begin{corollary}\label{zip} Let $R$ be a weakly reversible ring. Then, $R[x]$ is a zip ring if, and only if, so is $R$.
\end{corollary}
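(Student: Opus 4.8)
The plan is to reduce everything to a single annihilator--bookkeeping equivalence and then run the standard coefficient argument for the zip property in both directions. For a subset $X\subseteq R[x]$, write $c_X$ for the set of all coefficients of all polynomials in $X$, exactly as in the proof of Theorem~\ref{maj}. The crux is the equivalence
$$r_{R[x]}(X)=0 \iff r_R(c_X)=0,$$
together with its left-hand analogue. The forward implication is elementary: if $0\neq r\in r_R(c_X)$, then every coefficient of every $f\in X$ kills $r$ on the right, hence $fr=0$ for all $f\in X$, so $0\neq r\in r_{R[x]}(X)$; contraposing gives $r_{R[x]}(X)=0\Rightarrow r_R(c_X)=0$. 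The reverse implication $r_R(c_X)=0\Rightarrow r_{R[x]}(X)=0$ is precisely the content extracted from the proof of Theorem~\ref{maj}: that argument shows that whenever $r_{R[x]}(X)\neq 0$ one can exhibit, in every case, a \emph{non-zero} element of $R$ that is annihilated on the right by all of $c_X$, i.e. a non-zero member of $r_R(c_X)$. Since $R$ is simultaneously strongly right and strongly left AB, the symmetric statement $l_{R[x]}(X)=0\iff l_R(c_X)=0$ holds as well.

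Granting this equivalence, I would prove the two implications on the right. For ``$R$ right zip $\Rightarrow R[x]$ right zip'': take $Y\subseteq R[x]$ with $r_{R[x]}(Y)=0$. By the forward implication, $r_R(c_Y)=0$, so right zipness of $R$ yields a \emph{finite} set $C_0\subseteq c_Y$ with $r_R(C_0)=0$. Choosing a finite $Y_0\subseteq Y$ whose polynomials already display all coefficients in $C_0$, we get $r_R(c_{Y_0})\subseteq r_R(C_0)=0$, and the reverse implication applied to $Y_0$ gives $r_{R[x]}(Y_0)=0$; hence $R[x]$ is right zip. For ``$R[x]$ right zip $\Rightarrow R$ right zip'': take $X\subseteq R$ with $r_R(X)=0$ and regard $X\subseteq R[x]$. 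Since $c_X=X$ for a set of constants, the reverse implication forces $r_{R[x]}(X)=0$, so a finite $X_0\subseteq X$ satisfies $r_{R[x]}(X_0)=0$, whence $r_R(X_0)\subseteq r_{R[x]}(X_0)=0$.

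Running the identical argument on the left, with $l_{R[x]}(X)=0\iff l_R(c_X)=0$ in place of the right-hand equivalence, transfers left zipness in both directions. As a ring is zip precisely when it is simultaneously left and right zip, combining the four implications yields $R[x]$ zip $\iff R$ zip. I would also note in passing that Corollary~\ref{mccoy} (the McCoy property) is merely the ``$|X|=1$'' shadow of the equivalence above; what the zip transfer genuinely requires is its set-valued strengthening.

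The main obstacle is entirely concentrated in the reverse implication $r_R(c_X)=0\Rightarrow r_{R[x]}(X)=0$, equivalently in manufacturing a non-zero \emph{constant} annihilator out of a non-zero polynomial annihilator. This is exactly where weak reversibility is indispensable, and it is supplied by Theorem~\ref{maj} (through its proof, not merely through the bounded-annihilator conclusion): the minimal-degree selection of $g(x)\in r_{R[x]}(X)$ together with the repeated use of reversibility of powers and of the $2$-primal identity $Nil(R[x])=Nil(R)[x]$ is what forces a non-zero element of $R$ into $r_R(c_X)$. Everything outside this point is routine finite-coefficient bookkeeping and the order-reversing behaviour of annihilators.
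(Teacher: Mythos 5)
Your proposal is correct and follows the route the paper intends: Corollary~\ref{zip} is stated without proof as an immediate consequence of Theorem~\ref{maj}, and what you extract from that theorem's proof --- the equivalence $r_{R[x]}(X)=0\Leftrightarrow r_R(c_X)=0$ (and its left analogue) --- is precisely the content needed, after which the finite-subset bookkeeping is standard. Your one imprecision is attributing the left-hand equivalence to ``$R$ being strongly left AB'' rather than to the left-handed version of the argument in the proof of Theorem~\ref{maj}, but this does not affect the correctness of the derivation.
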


We finish our work with the following challenging question.

\medskip

\noindent{\bf Problem.} Does it follow that weakly reversible $\pi$-duo rings are themselves reversible?

\bigskip

\bibliographystyle{amsplain}

\begin{thebibliography}{99}

\bibitem{fuller} F.W. Anderson and K.R. Fuller, Rings and Categories of Modules, \emph{Springer Verlag}, 1998.

\bibitem{Camillo} V. Camillo and P.P. Nielsen, McCoy rings and zero-divisors, \emph{J. Pure Appl. Algebra} \textbf{212} (2008), 599-615.

\bibitem{5} C. Faith, Algebra II, Ring Theory, \emph{Grundlehren Der Mathematischen Wissenschaften. New York: Springe}, 1976.

\bibitem{fel} E.H. Feller, Properties of primary noncommutative rings, \emph{Trans. Amer. Math. Soc.}
\textbf{89(1)} (1958), 79-91.

\bibitem{11} N. Jacobson,  The Theory of Rings, \emph{New York, NY: Amer. Math. Soc.}, 1943.

\bibitem{Hwang} S.U. Hwang, N.K. Kim and Y. Lee, On rings whose right annihilator are bounded, \emph{Glasgow Math. J.} \textbf{51} (2009), 539-559.

\bibitem{quasi} H.K. Kim, N.K. Kim, M.S. Jeong, Y. Lee, S.J. Ryu and D.E. Yeo, On condition provided by nilradical,
\emph{J. Korean Math. Soc.}  \textbf{46} (2009), 1027-1040.

\bibitem{Lam2} T.Y. Lam, A First Course in Noncommutative Rings, \emph{Springer-Verlag New York, Inc}, 1990.

\bibitem{Lam} T.Y. Lam, Lectures on Modules and Rings, \emph{Springer-Verlag New York, Inc}, 1998.

\bibitem{NHM} N.H. McCoy, Remarks on divisors of zero, \emph{Amer. Math. Monthly} \textbf{49} (1942), 286-295.

\bibitem{moh} R. Mohammadi, A. Moussavi and M. Zahiri, Some abelian McCoy rings, \emph{J. Korean Math. Soc.} \textbf{60(6)} (2023), 1233-1254.

\bibitem{niel} P.P. Nielsen, Semi-commutativity and the McCoy condition, \emph{J. Algebra} \textbf{298} (2006), no. 1, 134-141.

\bibitem{zah} M. Zahiri, A. Moussavi and R. Mohammadi, On annihilator ideals in skew polynomial rings, \emph{Bull. Iran. Math. Soc.} \textbf{43(5)} (2017), 1017-1036.

\end{thebibliography}

\end{document}